\documentclass{article}

\usepackage{amsmath, amssymb}
\usepackage{graphicx, wrapfig} 
\usepackage{amsthm}
\usepackage{url}
\usepackage{color}
\usepackage{bm}
\usepackage{enumitem}

\theoremstyle{plain}
\newtheorem{theorem}{Theorem}

\theoremstyle{definition}
\newtheorem{definition}{Definition}
\newtheorem{example}{Example}

\newtheorem{proposition}{Proposition}
\newtheorem{lemma}{Lemma}
\newtheorem{corollary}{Corollary}

\title{Characterization of non-self OU sequences of two-component link diagrams}
\author{Naoki Sakata\thanks{Advanced Institute of Material Research, Tohoku University, 2-1-1 Katahira, Aoba-ku, Sendai, Miyagi, 980-8577, Japan. Email: sakata@casis.sakura.ne.jp},  
Ayaka Shimizu\thanks{Center for Soft Matter Physics, Ochanomizu University, 2-1-1 Otsuka, Bunkyo-ku, Tokyo, 112-8610, Japan. Email: shimizu.ayaka@ocha.ac.jp, shimizu1984@gmail.com} 
and Koya Shimokawa\thanks{Department of Mathematics, Ochanomizu University, 2-1-1 Otsuka, Bunkyo-ku, Tokyo, 112-8610, Japan. Email: shimokawa.koya@ocha.ac.jp}
\thanks{International Institute for Sustainability
with Knotted Chiral Meta Matter (WPI-SKCM$^2$),
Hiroshima University, 1-3-1 Kagamiyama, 
Higashi-Hiroshima, 739-8526, Japan}
\thanks{Research Institute for Science and Technology at Tokyo University of Science,  Division of Joint Research of Geometry and Natural Science, 1-3 Kagurazaka, Shinjuku-ku, Tokyo 162-8601, Japan}}
\date{\today}

\begin{document}

\maketitle

\begin{abstract}
A non-self OU sequence is a cyclic sequence of crossing information of non-self crossings that is obtained by traversing a knot component of an oriented link diagram. 
In this paper, we investigate what information can be derived from non-self OU sequences, and we completely characterize pairs of non-self OU sequences of diagrams of two-component links. 
We also characterize the pairs for specific prime links with crossing number up to five. 
\end{abstract}

\section{Introduction}

We investigate what information can be inferred from limited data of link diagrams. 
In \cite{H}, pseudo diagrams of knots, links and spatial graphs were introduced by Hanaki as a diagram with some crossing information (over/under information) missing. 
This study is motivated by the analysis of images of DNA knots in which some crossing information is unclear. 
In~\cite{M}, the complexity of each two-component link in a large collection of ring polymers is studied in the context of polymer melts. 
Images or simulation images of polymers typically have quite a large number of crossings, in particular self-crossings. 

Here, a {\it self-crossing} of a link diagram $D=K_1 \cup K_2 \cup \dots \cup K_r$ is a crossing of the same knot component $K_i$ ($i=1, 2, \dots , r$). 
For example, the link diagram shown in Figure~\ref{fig-whitehead} has a self-crossing indicated by $p$. 
We call a crossing between distinct components $K_i$ and $K_j$ a {\it non-self crossing}. 
In this paper, we ignore self-crossings and focus on non-self crossings in two-component link diagrams, considering cyclic sequences of crossing information.  \\
\begin{figure}[ht]
\centering
\includegraphics[width=3cm]{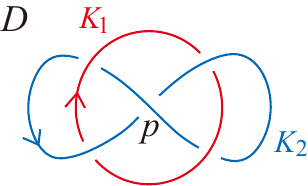}
\caption{An oriented diagram $D=K_1 \cup K_2$ of a Whitehead link. }
\label{fig-whitehead}
\end{figure}

For an oriented link diagram $D=K_1 \cup K_2 \cup \dots \cup K_r$ on the two-sphere $S^2$, when we traverse a knot component $K_i$ with the orientation, we encounter over-crossings ($O$) or under-crossings ($U$). 
By recording this crossing information, we obtain a cyclic sequence of $O$ and $U$, called the {\it OU sequence of $K_i$}. 
If we record crossing information only for non-self crossings, we obtain another cyclic sequence, and call it the {\it non-self OU sequence of $K_i$} and denote it by $\hat{f}(K_i)$. 
For an oriented link diagram $D=K_1 \cup K_2 \cup \dots \cup K_r$, we denote the tuple of the non-self OU sequences by $\hat{f}(D)=( \hat{f}(K_1), \hat{f}(K_2), \dots , \hat{f}(K_r))$. 
For example, the diagram $D$ in Figure \ref{fig-whitehead} has $\hat{f}(D)=(OUOU, OOUU)$.

There is a useful characterization for a sequence of crossing information to be realizable as a knot diagram.
Let $\# O(w)$ (resp. $\# U (w)$) denote the number of $O$s (resp. $U$s) in a cyclic sequence $w$. 
As defined in \cite{HNSY}, a cyclic sequence $w$ of $O$ and $U$ is {\it well-balanced} if $\# O(w)=\# U(w)$. 
Higa, Nakanishi, Satoh and Yamamoto proved in \cite{HNSY} that a cyclic sequence $w$ of $O$ and $U$ is realizable as an OU sequence for an oriented knot diagram $D$ if and only if $w$ is well-balanced.


We give a similar characterization for non-self OU sequences of two-component oriented link diagrams.
Here, we define a pair of non-self OU sequences to be ``well-balanced'' as follows.
\begin{definition}
A pair of cyclic sequences $(w_1, w_2)$ of $O$ and $U$ is {\it well-balanced} if it satisfies the following conditions (i) and (ii). 
\begin{itemize}
\item[(i)] The length of $w_1$ is even. 
\item[(ii)] $\# O(w_1)= \# U(w_2)$ and $\# U(w_1)= \# O(w_2)$. 
\end{itemize}
\label{def-well}
\end{definition}
\medskip 

\noindent If $(w_1, w_2)$ is well-balanced, then the length of $w_2$ is equal to that of $w_1$ by (ii).
Let $\mathcal{L}^{\text{odd}}$ (resp. $\mathcal{L}^{\text{even}}$) be the set of all oriented diagrams of two-component links with odd (resp. even) linking number. 
We put $\mathcal{L} = \mathcal{L}^{\text{odd}} \cup \mathcal{L}^{\text{even}}$.
For a set $\mathcal{S}$ of link diagrams, let $\hat{f}(\mathcal{S}) = \{ \hat{f}(D) \ | \ D \in \mathcal{S} \}$. 

We have the following characterization, which is proved in Section \ref{section-OU-sequence}. 
\begin{theorem}
For pairs $(w_1, w_2)$ of cyclic sequences of $O$ and $U$, the following hold.  
\begin{enumerate}[label=(\Roman*)]
\item $\hat{f}(\mathcal{L})= \{ (w_1, w_2) \mid (w_1, w_2)\text{: well-balanced}\, \}$
\item $\displaystyle \hat{f}(\mathcal{L}^{\text{even}})= \{ (w_1, w_2) \mid (w_1, w_2)\text{: well-balanced and }  \# O(w_1) \equiv 0 \pmod{2} \}$.
\item $\displaystyle \hat{f}(\mathcal{L}^{\text{odd}})= \{ (w_1, w_2) \mid (w_1, w_2)\text{: well-balanced and }  \# O(w_1) \equiv 1 \pmod{2} \}$.
\end{enumerate}
\label{thm-ch}
\end{theorem}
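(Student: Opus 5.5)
Both inclusions have to be shown, and the necessity direction is elementary. For a diagram $D=K_1\cup K_2$, every non-self crossing is over on exactly one component and under on the other. Hence an $O$ in $\hat f(K_1)$ corresponds to a $U$ in $\hat f(K_2)$ and vice versa, which gives (ii). Two closed curves in general position on $S^2$ meet in an even number of points, so the number of non-self crossings is even; this is (i). For the parity statements, recall that $\mathrm{lk}(K_1,K_2)$ equals the sum of signs of the crossings where $K_1$ passes over $K_2$. A sum of $\pm1$'s is congruent mod $2$ to the number of terms, so $\mathrm{lk}\equiv \#O(w_1)\pmod 2$. This gives the inclusions $\subseteq$ in (I)--(III), and (I) follows from (II) and (III) once those are proved, since well-balanced pairs split by the parity of $\#O(w_1)$.

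For sufficiency, let $(w_1,w_2)$ be well-balanced of common length $2n$, with $a=\#O(w_1)=\#U(w_2)$ and $b=\#U(w_1)=\#O(w_2)$, where $a+b=2n$. I will build a diagram by a direct construction. Take two round circles crossing in two points. Then apply Reidemeister II-type fingers: push an arc of $K_2$ across $K_1$ so as to create new pairs of consecutive non-self crossings, and choose crossing information freely. Since we ignore self-crossings, I am also allowed to let $K_2$ wind around and create self-crossings, which gives flexibility in ordering the crossings along $K_2$ independently of their order along $K_1$.

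The cleanest plan is as follows. Draw $K_1$ as a circle. Draw $K_2$ so that it crosses $K_1$ at $2n$ points $p_1,\dots,p_{2n}$, met in this order along $K_1$, and assign over/under at $p_i$ according to the $i$-th letter of $w_1$. The task is then to choose the order in which $K_2$ visits the $p_i$ so that the induced sequence along $K_2$ is $w_2$, i.e.\ the complement of the visited letters of $w_1$. Because $\#U(w_1)=\#O(w_2)$ and $\#O(w_1)=\#U(w_2)$, there is a bijection $\sigma$ from positions of $w_2$ to positions of $w_1$ matching $O$ in $w_2$ with $U$ in $w_1$ and $U$ in $w_2$ with $O$ in $w_1$. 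The key realizability claim is that any cyclic visiting order of the $2n$ points can be achieved by a closed curve $K_2$, subject only to the constraint that it alternates inside/outside of the circle $K_1$. Between consecutive crossings, $K_2$ travels inside or outside the disk, and arcs may cross each other freely, since those crossings are self-crossings of $K_2$ and are irrelevant.

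The alternation constraint is the main obstacle: the points visited in odd steps by $K_2$ must be entered from outside. I would handle it by exploiting the freedom in $\sigma$. Alternatively, and more simply, I would replace each crossing point $p_i$ by a small ``bump'': $K_2$ approaches $K_1$ from outside, crosses in at $p_i$, and immediately we force a partner crossing. That would double the crossings, so it does not work directly. Instead I would argue that no constraint really exists. After crossing in at $p_i$, the curve can reach any $p_j$ from inside and cross out. Its parity is then determined: the $k$-th crossing of $K_2$ goes in if $k$ is odd and out if $k$ is even. On $K_1$ the points are just points, and any point can be crossed in either direction, because direction is a local choice of how $K_2$ passes through $p_j$. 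So with $K_1$ a round circle, any ordering $\sigma$ is realizable: connect $\sigma(1),\sigma(2),\dots$ by arcs alternately inside and outside the disk, in general position. The parity count $\#O(w_1)\equiv\mathrm{lk}$ then automatically places the diagram in $\mathcal L^{\text{even}}$ or $\mathcal L^{\text{odd}}$, which completes (II) and (III). I would double-check this step carefully, in particular that arcs inside the disk may cross one another but not $K_1$, and that crossing information at $p_i$ can be set independently.
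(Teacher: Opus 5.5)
Your proposal is correct and follows essentially the same route as the paper. For necessity you use over/under complementarity at non-self crossings, the evenness of the number of intersection points, and $\mathrm{lk}\equiv\#O(w_1) \pmod 2$; the paper cites these last two facts from prior work, whereas you prove them directly. For sufficiency you use exactly the paper's Construction 1: label points on a simple closed curve $K_1$ by $w_1$, then thread $K_2$ through them in an order dictated by $w_2$, creating only self-crossings of $K_2$, and read off (II) and (III) from the parity of the linking number. Your observation that any visiting order $\sigma$ works simply makes explicit what the paper leaves implicit in Step 2 of that construction: the in/out direction at each point is a local choice, and the even length lets the curve close up.
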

\medskip 

\noindent Let $\mathcal{L}^T$ be the set of all oriented diagrams of a trivial two-component link. 
By Theorem \ref{thm-ch}, we have $\hat{f} (\mathcal{L}^T ) \subseteq \hat{f}(\mathcal{L}^{\text{even}})$. 
Moreover, we prove the following theorem in Section \ref{section-proof-ch2}. 

\medskip 
\begin{theorem}
We have $\hat{f}(\mathcal{L}^T) = \hat{f}(\mathcal{L}^{\text{even}})$.
\label{thm-ch2}
\end{theorem}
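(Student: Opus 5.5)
The inclusion $\hat{f}(\mathcal{L}^T)\subseteq \hat{f}(\mathcal{L}^{\text{even}})$ is immediate, since a trivial two-component link has linking number $0$. So the work is the reverse inclusion. By Theorem~\ref{thm-ch}~(II), it suffices to take an arbitrary well-balanced pair $(w_1,w_2)$ with $\#O(w_1)$ even and construct a diagram of the trivial two-component link realizing it. I will build such diagrams by local moves that change the non-self OU sequences in controlled ways, while checking that each move preserves the link type or only produces split/trivial pieces.

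First I will set up normal forms. Write $a=\#O(w_1)=\#U(w_2)$ and $b=\#U(w_1)=\#O(w_2)$. Then $a+b=|w_1|=|w_2|$ is even, and $a$ is even, so $b$ is even too.

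The basic building block is a small disk of $K_2$ meeting an arc of $K_1$. Start from a split diagram of two round circles, which has empty sequences. Push a finger of $K_1$ across a strand of $K_2$ as a Reidemeister II move. This inserts two crossings, adjacent on $K_1$, both with the same information, say $OO$ on $K_1$ and $UU$ on $K_2$.

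The key point is that the order in which the crossings appear along $K_2$ can be decoupled from their order along $K_1$. Since self-crossings are ignored, I can freely let $K_2$ (or $K_1$) wander around the sphere, creating any self-crossings I like. The plan is therefore:
\begin{enumerate}
\item Realize the trivial link as $K_1$ a round circle, with $K_2$ a curve that only makes Reidemeister-II-type finger passes through $K_1$ at chosen positions.
\item Arrange each pass to be entirely over or entirely under $K_1$, so each finger is an unknotting clasp-free pass and the link remains trivial.
\end{enumerate}

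More concretely, I will take $K_1$ lying entirely above $K_2$ in some regions and below it in others, as a layered construction. Each $O$ of $w_1$ must be paired with a $U$ of $w_2$ at the same crossing. I need a bijection between the positions in $w_1$ and those in $w_2$, matching $O$'s of $w_1$ to $U$'s of $w_2$ and $U$'s of $w_1$ to $O$'s of $w_2$. This bijection must be realizable by a planar drawing in which the link is trivial.

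For triviality I will use the following construction. Let $K_2$ lie in the height level $z=0$. Let $K_1$ be a closed curve whose projection crosses $K_2$ at the prescribed crossings, with $K_1$ at height $+1$ near its $O$-crossings and $-1$ near its $U$-crossings. $K_1$ changes height only between crossings. The linking number is $(\text{signed count})/2$, and I will choose signs so that it is $0$. The parity of $a$ makes this possible, since the crossings of each type can then be split into pairs of opposite sign.

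Making the link genuinely trivial, not just of linking number $0$, is the main obstacle. My plan is to construct $K_1$ so that its $O$-crossings come in pairs forming a finger over $K_2$, and its $U$-crossings come in pairs forming a finger under $K_2$. Each such finger is removable by Reidemeister II. Because $a$ and $b$ are both even, the $O$'s of $w_1$ and the $U$'s of $w_1$ can each be grouped into pairs. However, these pairs need not be adjacent in the cyclic words. To handle non-adjacent pairs, I will route the fingers of $K_1$ so that their tips reach $K_2$ at the points dictated by the cyclic order in $w_2$. The fingers are allowed to cross $K_1$ itself, which costs only self-crossings and is invisible to $\hat f$, and to cross each other freely at self-crossings chosen so that the fingers stack vertically. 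Concretely, I will assign each finger its own height level, so $K_1$ becomes a descending-type (hence unknotted) curve. Its fingers then retract in order of height, which shows the link is isotopic to a split unlink.

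Finally I will check that every well-balanced $(w_1,w_2)$ with $a$ even admits a pairing of $w_1$'s symbols and a compatible matching to $w_2$ that is realized by these stacked fingers. Matching finger endpoints to prescribed cyclic positions on the round $K_2$ is always possible, again because fingers may cross each other at self-crossings. This verification is the step I expect to need the most care.
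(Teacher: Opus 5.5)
Your reduction to Theorem~\ref{thm-ch}~(II) and the easy inclusion are fine, but the construction fails. In every version of your model one component is an embedded round circle: $K_2$ at height $z=0$, or $K_1$ in your first plan. Then the signs of the non-self crossings are not yours to choose. Along $K_1$, consecutive non-self crossings alternately enter and leave the disk bounded by $K_2$, so each sign is determined by the letter and the parity of its position. This is Proposition~\ref{prop-lk-Phi}: every diagram of your form has $|lk(D)|=\Phi(w_1)$ (or $\Phi(w_2)$ if $K_1$ is the round one). So your claim that the parity of $a$ lets you split the crossings into opposite-sign pairs is false.

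For example, $(w_1,w_2)=(OUOU,OUOU)$ is well-balanced with $\#O(w_1)=2$, but $\Phi(w_1)=\Phi(w_2)=2$. Hence any realizing diagram in which one component has no self-crossings has $|lk|=2$. In your finger language, the $O$'s and $U$'s of this $w_1$ interleave, so they do not form removable Reidemeister~II bumps. Both arcs of $K_1$ inside the disk bounded by $K_2$ join an $O$-crossing to a $U$-crossing, and they pierce that disk with the same sign; putting fingers at different heights does not change this. In general, same-letter pairs realizable by nested fingers form a non-crossing matching of $w_1$. Such a matching exists only when $w_1$ reduces to the empty word, i.e.\ when $\Phi(w_1)=0$.

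The missing idea is to realize the irreducible alternating part of $w_1$ by a core in which \emph{both} components have self-crossings; this is forced when $\Phi(w_1),\Phi(w_2)>0$ and the link is trivial. The paper assumes $\Phi(w_1)\le\Phi(w_2)$. It fixes a reduction sequence for $w_1$ together with $\Phi(w_1)$ consecutive ``$OU$'' pairs of $w_1$ that survive it (Lemma~\ref{lem-seq-1}), and also $\Phi(w_1)$ consecutive ``$OU$'' pairs in $w_2$. These are placed on the core diagram $A(\Phi(w_1)/2)$ of Corollary~\ref{cor-diagrams}(1). This core is a trivial-link diagram with alternating non-self OU sequences, obtained from the standard $(2,2m)$-torus diagram by the move of Figure~\ref{fig-m2}, which preserves $\hat{f}$ and changes $lk$ by $2$.

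The reduced letters form nested pairs precisely because they come from a reduction sequence rather than an arbitrary grouping. They are added as fingers of $K_1$ in reverse reduction order, each lying entirely over or entirely under everything it crosses, so the link type is unchanged. Your scheme, with the pairing taken from a reduction sequence, only recovers the case $\Phi(w_1)=0$ (or $\Phi(w_2)=0$ after swapping), which is Example~\ref{ex-Phi-zero}.
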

\medskip 

\noindent In this paper, we consider links up to the mirror image. 
We also characterize pairs of non-self OU sequences for two-component links called a Hopf link, Solomon link, and Whitehead link and denoted by $2^2_1$, $4^2_1$, and $5^2_1$ in Rolfsen's knot table \cite{R}. 
Let $\mathcal{L}^H$, $\mathcal{L}^S$ and $\mathcal{L}^W$ be the sets of all oriented diagrams of the links $2^2_1$, $4^2_1$ and $5^2_1$. 
The following theorem is proved in Section \ref{section-proof-ch3}.

\medskip 
\begin{theorem}
The following hold. 
\begin{enumerate}[label=(\Alph*)]
\item $\hat{f}(\mathcal{L}^H) = \hat{f}(\mathcal{L}^{\text{odd}})$.
\item $\hat{f}(\mathcal{L}^S) = \hat{f}(\mathcal{L}^W) = \left\{ (w_1, w_2) \;\middle|\; \begin{aligned}
      & (w_1, w_2)\text{: well-balanced,} \\
      & \#O(w_1) \equiv 0 \pmod{2}, \\
      & \#O(w_1) \geq 2 \text{ and } \#U(w_1) \geq 2
    \end{aligned}
  \right\}.$ 
\end{enumerate}
\label{thm-ch3}
\end{theorem}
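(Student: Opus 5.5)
The plan has two parts: necessity and sufficiency, each for (A) and (B).

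\emph{Necessity.} For (A), the Hopf link has linking number $\pm 1$, so $\mathcal{L}^H \subseteq \mathcal{L}^{\text{odd}}$. Hence $\hat{f}(\mathcal{L}^H)\subseteq \hat{f}(\mathcal{L}^{\text{odd}})$ follows at once from Theorem~\ref{thm-ch}(III).

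For (B), the Solomon link $4^2_1$ has linking number $\pm 2$ and the Whitehead link has linking number $0$. By Theorem~\ref{thm-ch}(II), both sets lie in the well-balanced set with $\#O(w_1)$ even. The remaining requirement is $\#O(w_1)\ge 2$ and $\#U(w_1)\ge 2$. Suppose instead that $\#O(w_1)\le 1$; by parity $\#O(w_1)=0$. Then $K_1$ lies under $K_2$ at every non-self crossing. Pushing $K_1$ slightly below the projection plane then separates the two components, so the link is split. (If $\#U(w_1)=0$, argue symmetrically.) Neither $4^2_1$ nor $5^2_1$ is split, so both inequalities must hold.

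\emph{Sufficiency.} The strategy is to start from a standard diagram of the link and enlarge it with local moves that insert non-self crossings in prescribed patterns, without changing the link type.

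\begin{itemize}
\item Moves on $K_1$. A Reidemeister II move between $K_1$ and $K_2$ inserts $OO$ or $UU$ into $\hat f(K_1)$ at any chosen position, and correspondingly $UU$ or $OO$ into $\hat f(K_2)$.
\item Reordering. Since the positions along the two components can be chosen somewhat independently, I will first reduce to this: given a diagram realizing some subsequence, the letters can be interleaved freely.
\item Moves on $K_2$. An alternative is a finger move of a small arc of $K_2$ that crosses $K_1$ twice. Combined with planar isotopy and Reidemeister III across existing crossings, this lets us slide the two new crossings along each component independently.
\end{itemize}

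The key lemma I would prove, in the style of the proof of Theorem~\ref{thm-ch}, is the following. If $(w_1,w_2)$ realized by $D$ in a given class is modified to $(w_1',w_2')$ by inserting $OO$ anywhere in $w_1$ and $UU$ anywhere in $w_2$, the result is realized by a diagram of the same link. To prove it, take a long thin finger of $K_1$ that runs alongside $K_1$ to the chosen position, then pokes across a strand of $K_2$ located at the chosen position in $K_2$'s order. The finger passes over $K_2$, and over (or under, as convenient) everything else, so only self-crossings are added elsewhere. Because the finger can be chosen to always pass over, no other non-self crossings are created. This requires a route in which the finger only crosses $K_1$ itself; I expect to achieve this by routing along $K_1$.

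\emph{Base cases.} With the lemma, (A) reduces to realizing a minimal set of base pairs. Every well-balanced pair with $\#O(w_1)$ odd and length $2n$ can be reduced, by deleting $OO$ from $w_1$ and $UU$ from $w_2$ (or $UU$ from $w_1$ and $OO$ from $w_2$), to a pair of length $2$. That pair is $(OU,OU)$, realized by the standard Hopf diagram, or $(OU,UO)$, which is the same cyclic sequence. The deletions require $\#O(w_1)\ge 2$ or $\#U(w_1)\ge 2$ while the length exceeds $2$, which holds.

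For (B), reduce similarly down to length $4$ with $\#O=\#U=2$. The cyclic words in $w_1$ are $OOUU$ or $OUOU$, paired with $OOUU$ or $OUOU$ in $w_2$, giving four base pairs. Each must be realized by a diagram of $4^2_1$ and by one of $5^2_1$. The Whitehead diagram of Figure~\ref{fig-whitehead} gives $(OUOU,OOUU)$; reversing the orientation or swapping components gives others. The remaining cases, for example $(OOUU,OOUU)$ or $(OUOU,OUOU)$, I expect to get by Reidemeister moves that create or remove self-crossings only, or by drawing explicit diagrams.

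\emph{Main obstacle.} The hard step is the key lemma: arranging the finger so that the new crossings land at arbitrary, independently chosen positions in both cyclic orders without altering other non-self crossings. Second to that is checking the finite base cases for the Solomon and Whitehead links.
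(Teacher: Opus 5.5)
\textbf{There is a genuine gap in the sufficiency half.} Your necessity arguments are fine. Pushing $K_1$ below the plane is just the bound $sp(L)\le\min\{\#O(w_1),\#U(w_1)\}$ of Proposition~\ref{prop-sp}, which is what the paper uses.

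Your finger pokes across a single strand of $K_2$, i.e.\ it makes a Reidemeister~II bigon. So it inserts an \emph{adjacent} $OO$ into $w_1$ and an \emph{adjacent} $UU$ into $w_2$. These are inverse reductions, and the OU number is invariant under reductions, so your move preserves both $\Phi(w_1)$ and $\Phi(w_2)$. Hence the reduction to base cases of length $2$ (resp.\ $4$) cannot work:
\begin{itemize}
\item $(OUOUOU,OUOUOU)$ must lie in $\hat f(\mathcal{L}^H)$, has $\Phi=3$, and contains no $OO$ or $UU$ at all;
\item $(OUOUOUOU,OUOUOUOU)$ plays the same role in (B).
\end{itemize}
The base cases form an infinite family indexed by the OU number.

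Your remark that the deletions only need $\#O(w_1)\ge 2$ shows you are tacitly deleting non-adjacent letters. But inserting letters at non-adjacent positions of \emph{both} components is not what your finger does, and it is not a cheap move. By Proposition~\ref{prop-lk-Phi}, a Hopf diagram with $\Phi(w_1)=\Phi(w_2)=3$ must have self-crossings on both components. The ``free interleaving'' step that would repair this is only asserted, and it carries essentially the whole content of the theorem.

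Two smaller problems with the finger route. A finger routed alongside $K_1$ picks up two new non-self crossings each time it passes a non-self crossing of $K_1$. And if $K_2$ has self-crossings, the finger cannot in general reach a prescribed strand of $K_2$ without crossing $K_2$ elsewhere.

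\textbf{The paper supplies two ideas you are missing.}

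First, assuming $\Phi(w_1)\le\Phi(w_2)$, it puts the irreducible part into a core diagram: $B(N)$ for the Hopf link, and $C(N)$, $D(N)$ for $4^2_1$, $5^2_1$. Their non-self OU sequences are alternating of length $2\Phi(w_1)$. The tangle $T$ supplies the self-crossings that Proposition~\ref{prop-lk-Phi} forces while keeping the link type. On the $w_2$ side one fixes $\Phi(w_1)$ disjoint $OU$ pairs, using Lemma~\ref{lem-seq-1}(1).

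Second, only $K_1$ is ever modified, and $K_2$ stays an embedded circle up to tiny hooks. So the finger $C\setminus I$, placed over (or under) everything, crosses $K_2$ at two \emph{arbitrary} points. It restores an adjacent pair in $w_1$, undoing the reductions of $w_1$ in reverse order, but places the two letters anywhere in $w_2$. That asymmetry is exactly what lets every $w_2$ be reached.

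For $\Phi(w_1)=0$ in (B), non-splitness forces you to stop reducing at length $4$ via Lemma~\ref{lem-seq-2}(2). You then use the special diagrams $D^a,D^b$ (resp.\ $D^{a'},D^{b'}$) as cores.
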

\medskip 

\noindent Let $T(2,2n)$ be a $(2,2n)$-torus link with a positive integer $n$. 
Let $\mathcal{L}^{T(2,2n)}$ be the set of all oriented diagrams of $T(2,2n)$. 
The following theorem implies that there exist infinitely many distinct sets of pairs of non-self OU sequences. 

\medskip 
\begin{theorem}
If $n \neq m$, then $\hat{f}(\mathcal{L}^{T(2,2n)}) \neq \hat{f}(\mathcal{L}^{T(2,2m)})$. 
\label{thm-ch4}
\end{theorem}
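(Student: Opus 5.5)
We plan to separate the sets $\hat{f}(\mathcal{L}^{T(2,2n)})$ by a numerical invariant read off from the pair of non-self OU sequences, namely a lower bound on $|\operatorname{lk}|$ that the sequences force. The natural invariant is the \emph{minimal length} of $w_1$ over pairs in the set, or better, a quantity built from the run structure of $w_1,w_2$. The cleanest choice: for $D\in\mathcal{L}^{T(2,2n)}$ (oriented in some way), $|\operatorname{lk}(K_1,K_2)| = n$, and the linking number can be computed from over-crossings of $K_1$ alone: $\operatorname{lk}=\sum_{c:\,K_1\text{ over}}\varepsilon(c)$. Hence $n = |\operatorname{lk}| \le \#O(w_1)$, and similarly $n\le \#U(w_1)$ (counting crossings where $K_1$ is under). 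So every pair $(w_1,w_2)\in \hat f(\mathcal{L}^{T(2,2n)})$ satisfies $\#O(w_1)\ge n$ and $\#U(w_1)\ge n$.

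Next we show this bound is attained: the standard diagram of $T(2,2n)$ with both components oriented parallel has $2n$ non-self crossings, and along $K_1$ they alternate, giving $w_1=(OU)^n$, $w_2=(UO)^n$ cyclically, so $\#O(w_1)=\#U(w_1)=n$. (One should double check the orientation convention: with antiparallel orientation one gets the same sequences, so either convention works; since links are taken up to mirror image and orientation, all choices are in the set.) Thus
\[
\min\{\#O(w_1) \mid (w_1,w_2)\in \hat f(\mathcal{L}^{T(2,2n)})\} = n .
\]

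Given $n\neq m$, say $n<m$, the pair $((OU)^n,(UO)^n)$ lies in $\hat f(\mathcal{L}^{T(2,2n)})$ but has $\#O(w_1)=n<m$, so it cannot lie in $\hat f(\mathcal{L}^{T(2,2m)})$. This proves the theorem.

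The only point needing care is the inequality $|\operatorname{lk}|\le \#O(w_1)$: it rests on the standard fact that the linking number equals the signed count of crossings where $K_1$ passes over $K_2$ (equivalently, half the total signed count, with the over- and under-counts equal). I expect no real obstacle; the main thing to verify is that changing the orientation of a component or taking the mirror image only changes the sign of $\operatorname{lk}$, so $|\operatorname{lk}|=n$ holds for every oriented diagram in $\mathcal{L}^{T(2,2n)}$, and that the roles of $w_1,w_2$ (which component is called $K_1$) don't matter since the bound holds symmetrically for both components.
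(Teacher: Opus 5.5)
Your argument is correct and is essentially the paper's proof. Both use the same witness: the pair $(w,w)$ with $w$ alternating of length $2n$, realized by the minimal diagram of $T(2,2n)$. Both exclude it from $\hat{f}(\mathcal{L}^{T(2,2m)})$ because every diagram of $T(2,2m)$ has $\#O(w_1),\#U(w_1)\geq m$.

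The only difference is how that bound is obtained. The paper goes through the splitting number, using $sp(T(2,2m))=m$ and Proposition~\ref{prop-sp}, with $sp\geq |lk|$ justified in a footnote. You use $|lk(D)|\leq\min\{\#O(w_1),\#U(w_1)\}$ directly, which is the first part of Lemma~\ref{lem-lk-mod}. That gives the same inequality by a slightly shorter route.
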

\medskip

\noindent In the proofs of Theorems \ref{thm-ch2} and \ref{thm-ch3}, we use the ``OU number'' of non-self OU sequences, which was recently introduced by the authors in \cite{SSS} in the study of the number of Reidemeister moves of type III. 
In Section \ref{section-OU-number}, we give a geometrical meaning of the OU number (Proposition \ref{prop-lk-Phi}). 
Using it, we also prove the following proposition in Section \ref{section-OU-number}. 

\medskip 
\begin{proposition}
Let $(w_1, w_2)$ be a pair of cyclic sequences of $O$ and $U$. 
If the OU numbers of $w_1$ and $w_2$ are not the same, then any link diagram $D$ with $\hat{f}(D)=(w_1, w_2)$ has a self-crossing. 
\label{prop-self-crossing}
\end{proposition}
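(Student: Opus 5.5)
The plan is to prove the contrapositive: if a diagram $D=K_1\cup K_2$ with $\hat{f}(D)=(w_1,w_2)$ has no self-crossing, then the OU numbers of $w_1$ and $w_2$ coincide. The tool is the geometric interpretation of the OU number given in Proposition~\ref{prop-lk-Phi}. I expect that proposition to express the OU number of $\hat{f}(K_i)$ as a quantity determined by the non-self crossings of $D$, such as the linking number $\mathrm{lk}(D)$ or a signed count of the non-self crossings, plus a correction term coming from the self-crossings of $K_i$. This expected form is an assumption; if the actual statement differs, the argument must be adapted to it.

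\textbf{Step 1.} Assuming Proposition~\ref{prop-lk-Phi} has this form, apply it to each component $K_1$ and $K_2$ separately. The point to check is that the main part of the formula is symmetric in the two components. The linking number is symmetric, and the set of non-self crossings is shared by $K_1$ and $K_2$, with the roles of $O$ and $U$ exchanged. This exchange is exactly condition (ii) of Definition~\ref{def-well}.

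\textbf{Step 2.} Suppose $D$ has no self-crossing. Then the self-crossing correction term for each component is an empty sum and vanishes. So both OU numbers reduce to the same symmetric quantity and are therefore equal, which is the contrapositive.

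\textbf{Main obstacle.} The difficulty is making sure the correction term genuinely depends only on self-crossings of $K_i$, and not, for example, on how $K_j$ winds around the regions cut out by $K_i$.

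If the form of Proposition~\ref{prop-lk-Phi} does not make this transparent, I would argue directly from the planar picture. Without self-crossings, each $K_i$ projects to a simple closed curve on $S^2$. By the Jordan curve theorem, $K_1$ bounds two disks. Along $K_2$ the non-self crossings alternate between entering and leaving these disks, and the same holds with the roles of $K_1$ and $K_2$ exchanged.

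The remaining task is to compare the cyclic order of the crossings along $K_1$ with the cyclic order along $K_2$. Both orders are determined by the arrangement of two simple closed curves in $S^2$. I would use this, together with the fact that a crossing labelled $O$ on one component is labelled $U$ on the other, to compute both OU numbers from the same combinatorial data and show that they agree. This comparison is where I expect most of the work to lie.
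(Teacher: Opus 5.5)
Your plan is essentially the paper's proof: the paper proves the contrapositive of Corollary~\ref{cor-no-self-crossing}, which comes from applying Proposition~\ref{prop-lk-Phi} (if $K_1$ has no self-crossings then $|lk(D)|=\Phi(K_2)$) to each component in turn, giving $\Phi(K_1)=\Phi(K_2)=|lk(D)|$. One correction to your guessed form: what obstructs $\Phi(K_2)=|lk(D)|$ is self-crossings of the \emph{other} component $K_1$ (they let $K_2$ cross $K_1$ twice in succession in the same direction), not self-crossings of $K_2$; since $D$ has no self-crossings at all this does not affect your argument, and your fallback comparison of cyclic orders is unnecessary.
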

\medskip 

\noindent Proposition \ref{prop-self-crossing} implies that we can obtain information of not only non-self crossings but also self-crossings of a diagram from the pair of non-self OU sequences. \\

The rest of the paper is organized as follows. 
In Section \ref{section-OU-sequence}, we review the OU sequence and prove Theorems \ref{thm-ch} and \ref{thm-ch4}. 
In Section \ref{section-OU-number}, we recall the OU number and study the relation to the linking number. 
In Section \ref{section-proof-ch2}, we prove Theorem \ref{thm-ch2}. 
In Section \ref{section-proof-ch3}, we prove Theorem \ref{thm-ch3}.

\section{OU sequences and non-self OU sequences}
\label{section-OU-sequence}

In this section, we study the OU sequences and prove Theorem \ref{thm-ch}. 
In Section \ref{subsection-OU-k}, we review the OU sequence of knot diagrams. 
In Section \ref{subsection-OU-l}, we recall the non-self OU sequence for link diagrams and observe the relation to the linking warping degree and splitting number. 
In Section \ref{subsection-well}, we consider well-balanced pairs of sequences and prove Theorem \ref{thm-ch}.

\subsection{OU sequence of knot diagrams}
\label{subsection-OU-k}

We review the OU sequence of knot diagrams. 
Let $D$ be an oriented knot diagram on $S^2$. 
By traversing $D$ and recording the crossing information at each crossing, we obtain a cyclic sequence consisting of the letters ``$O$'' and ``$U$'', where ``$O$'' denotes an over-crossing and ``$U$'' denotes an under-crossing. 
We refer to this as the {\it OU sequence of $D$} and denote it by $f(D)$. 
For example, the oriented knot diagram $D$ shown in Figure \ref{fig-OU-k} has the OU sequence $f(D)=OUUOOUUUOUOOOU$. 
\begin{figure}[ht]
\centering
\includegraphics[width=3cm]{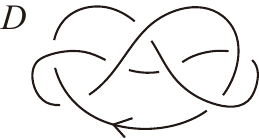}
\caption{A knot diagram $D$ with $f(D)=OUUOOUUUOUOOOU$.}
\label{fig-OU-k}
\end{figure}
We can also obtain the OU sequence from the Dowker-Thistlethwaite code or the Gauss code of a knot diagram, while we cannot recover these codes from the OU sequence since the pairing information is missing. \\

In this section, $O^a$ (resp. $U^b$) in a sequence denotes $a$ consecutive $O$s (resp. $b$ consecutive $U$s). 
Let $\mathcal{K}$ be the set of all oriented knot diagrams. 
Let $\mathcal{U}$ (resp. $\mathcal{N}$) be the set of all diagrams of the trivial knot (resp. nontrivial knots). 
Let $\mathcal{T}$ be the set of all diagrams of a trefoil knot. 
For each set $S$ of knot diagrams, put $f(S) = \{ f(D) \ | \ D \in S \}$. 
Recall that a sequence $w$ of $O$ and $U$ is said to be well-balanced if $\# O(w)=\# U(w)$. 
In \cite{HNSY}, the OU sequences of knot diagrams were characterized as follows:

\begin{theorem}[\cite{HNSY}]
The following hold. 
\begin{enumerate}[label=(\arabic*)]
\item $f(\mathcal{K})= f(\mathcal{U})= \{ w \ | \ w \text{: well-balanced } \}$.
\item $f(\mathcal{N})\subsetneq f(\mathcal{K})$.
Furthermore, $w \in f(\mathcal{K})\setminus f(\mathcal{N})$ if and only if $w$ is the empty sequence, or is of the form $O^aU^a$, $OU^a O^{a+b-1}U^b$, $O^aUO^bU^{a+b-1}$, $OU^aO^{a+2b-1}UO^cU^{2b+c-1}$, or $OU^{a+2b-1}O^aUO^{2b+c-1}U^c$ for some positive integers $a$, $b$, and $c$.
\item $f(\mathcal{N})=f(\mathcal{T})$.
\end{enumerate}
\end{theorem}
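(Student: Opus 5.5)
Since this theorem is quoted from \cite{HNSY}, I would reprove it along the following lines, treating (1) as routine and (2)–(3) as the real content.

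For (1), necessity is immediate. Every crossing of a knot diagram is passed exactly twice, once over and once under, so $f(D)$ is well-balanced.

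For sufficiency, and in fact realization by a trivial knot diagram, I would induct on the length of $w$.
\begin{itemize}
\item The empty sequence is realized by a simple closed curve.
\item A nonempty well-balanced cyclic word contains two cyclically adjacent letters that differ, $OU$ or $UO$. Deleting them gives a shorter well-balanced word $w'$.
\item By induction $w'$ is realized by a diagram $D'$ of the trivial knot. Add a small Reidemeister~I kink on the arc of $D'$ corresponding to the gap where the pair was deleted. A kink is traversed twice in succession, and we may choose it to produce either $OU$ or $UO$.
\end{itemize}
The resulting diagram realizes $w$ and still represents the trivial knot, which proves $f(\mathcal{K})=f(\mathcal{U})$.

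For (3) I would first collect trefoil diagrams realizing some base words. The standard diagram gives $OUOUOU$. Applying Reidemeister~II moves, and Reidemeister~I moves on each side of the strand, gives further families of trefoil diagrams. Inserting kinks as in (1) then shows that every word reducing to one of these base words, by repeatedly deleting adjacent $OU$/$UO$ pairs, lies in $f(\mathcal{T})$. The combinatorial core is the following claim: a well-balanced word $w$ not on the exceptional list of (2) always contains, after such deletions, one of the realizable trefoil patterns, for instance an alternating subword $OUOUOU$ in suitable position, or a word of the form arising from a trefoil diagram with an extra Reidemeister~II clasp. To prove it I would write $w$ in run-length form $O^{a_1}U^{b_1}\cdots O^{a_k}U^{b_k}$ and split into cases by the number $k$ of $O$-runs.
\begin{itemize}
\item For $k\ge 3$, repeatedly deleting one letter from each end of neighbouring runs should reduce $w$ to $OUOUOU$ up to kinks.
\item For $k=1$ the word is $O^aU^a$, which is exceptional.
\item For $k=2$ the equations $a_1+a_2=b_1+b_2$ make the exceptional families $OU^aO^{a+b-1}U^b$ and $O^aUO^bU^{a+b-1}$ appear exactly as the two-run words that fail to reduce.
\item The $k=3$ exceptions $OU^aO^{a+2b-1}UO^cU^{2b+c-1}$ and its reverse must be separated from the reducible $k=3$ words in the same way.
\end{itemize}
Since $\mathcal{T}\subset\mathcal{N}$, this gives $f(\mathcal{K})\setminus\{\text{exceptional words}\}\subseteq f(\mathcal{T})\subseteq f(\mathcal{N})$.

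For (2), it remains to show that every diagram whose OU sequence lies on the exceptional list represents the trivial knot. The word $O^aU^a$ is a descending sequence from a suitable base point, so any such diagram is trivial. For the other families I would use the warping degree $d(D)$, the minimal number of crossings that must be changed to make $D$ descending from some base point. For these words the base point can be chosen so that only very few letters are out of order, so $d(D)$ or $d(-D)$ is at most $1$. Then a single crossing change turns $D$ into a descending, hence trivial, diagram. I would combine this with an argument that the specific crossing to be changed is nugatory, or that both resolutions give a trivial diagram. This would follow from a parity or position constraint forced by the run lengths, for example that the two passages of the relevant crossing split the sequence into two well-balanced pieces. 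Together with the previous paragraph this gives $f(\mathcal{N})\subsetneq f(\mathcal{K})$, the stated description of $f(\mathcal{K})\setminus f(\mathcal{N})$, and also (3).

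I expect the main obstacle to be this last step, together with the fine case analysis for $k=2$ and $k=3$ runs. There one must match the exceptional list exactly, showing both that those words are never realized by a nontrivial knot and that every other word is realized by a trefoil.
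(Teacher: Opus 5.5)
Your part (1) is correct: deleting a cyclically adjacent $OU$/$UO$ pair and re-inserting it as a Reidemeister~I kink shows that every well-balanced word is realized by an unknot diagram. The paper only quotes this theorem from HNSY, so there is no in-paper proof to compare with. Parts (2) and (3), however, are not just unfilled: several of the concrete claims they rest on are false.

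\textbf{Part (3): your supply of trefoil diagrams is too small.} Inserting $OU$ or $UO$ (Reidemeister~I), or inserting $OO$ and $UU$ (Reidemeister~II), never decreases the number of runs of a cyclic word. So every word you generate from $OUOUOU$ has at least three $O$-runs. The two-run exceptional families are exactly the two-run words having a run of length $1$. Hence non-exceptional two-run words such as $O^2U^2O^2U^2$ exist, and your construction never reaches them. You need a genuinely different base diagram. One example is the figure-eight projection with both crossings of one twist region switched: it is a trefoil diagram (the two-bridge diagram with twists $2,-2$) with OU sequence $OOUUOOUU$. Every non-exceptional two-run word then follows by kinks inserted at run boundaries.

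Your claim for $k\ge 3$ also fails. A word reduces to $OUOUOU$ by pair deletions if and only if its height walk ($O=+1$, $U=-1$) crosses some half-integer level at least six times. The non-exceptional words $OUO^3UOU^3$ ($k=3$) and $OUO^3UO^3UO^3U^7$ ($k=4$) cross every level at most four times. These need the Reidemeister~II word $OUO^3UOU^3$ (a clasp pushed across a bigon of the standard trefoil) as a further base case. So the real content of (3) is a proof that every non-exceptional word reduces to one of an explicit list of trefoil words. That list must contain at least $OUOUOU$, $OUO^3UOU^3$ and $O^2U^2O^2U^2$. The proposal neither identifies the list nor proves the reduction.

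\textbf{Part (2): the triviality argument does not work as proposed.} Warping degree at most $1$ does not force triviality, and the constraint you offer is satisfied by a knotted diagram. Read the standard trefoil from just before its first letter. Exactly one crossing is first met as an under-crossing, and its two passages cut $OUOUOU$ into $OU$ and $UO$, two well-balanced pieces. Your alternative, ``both resolutions are trivial'', is just the statement to be proved.

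What does work for the two-run families is a different observation. Read $O^aUO^bU^{a+b-1}$ from the start. The only possible warping crossing has its two passages separated by a block of $O$'s alone. So the sub-loop it cuts off has no self-crossings and lies over the rest of the diagram. It can be isotoped away, leaving a descending diagram. The same works for $OU^aO^{a+b-1}U^b$, with an all-$U$ sub-loop.

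For the three-run families you give nothing comparable. Even $\min\{d(D),d(-D)\}\le 1$ is only asserted; the natural base point gives at most $2$. So the implication ``exceptional $\Rightarrow$ trivial'', which is the heart of (2), is not established.
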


\subsection{The non-self OU sequence of link diagrams}
\label{subsection-OU-l}

In this subsection, we review the non-self OU sequences of link diagrams. 
Let $D=K_1 \cup K_2 \cup \dots \cup K_r$ be an oriented link diagram on $S^2$. 
By traversing a knot component $K_i$ and recording the crossing information at each {\it non-self crossing}, we obtain a cyclic sequence of $O$ and $U$. 
We refer to this as the {\it non-self OU sequence of $K_i$} and denote it by $\hat{f}(K_i)$. 
For the length of $\hat{f}(K_i)$, the following proposition holds. 

\medskip
\begin{proposition}[\cite{SSS}]
Let $D= K_1 \cup K_2 \cup \dots \cup K_r$ be an oriented link diagram. 
The length of the non-self OU sequence $\hat{f}(K_i)$ is an even number for each $K_i$. 
\label{prop-length}
\end{proposition}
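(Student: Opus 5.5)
The length of $\hat{f}(K_i)$ is the total number of non-self crossings of $K_i$ with the other components, counted with multiplicity. So the plan is to show that for each $j \neq i$, the number of crossings between $K_i$ and $K_j$ is even; summing over $j$ then gives the claim.

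For a fixed pair $K_i, K_j$, regard each as a closed curve on $S^2$. After a small perturbation, if needed, these are in general position, and the crossings between them are exactly the transverse intersection points of the two curves. The key fact to invoke is the mod-$2$ intersection number of two closed curves on $S^2$. Since $H_1(S^2;\mathbb{Z}/2)=0$, every closed curve is null-homologous, so any two transverse closed curves meet in an even number of points. Self-intersections of $K_i$ and $K_j$ play no role here, because we count only the points where $K_i$ meets $K_j$.

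If a more elementary argument is preferred, I would use the Jordan-curve parity argument, with one adjustment: $K_j$ may be non-simple. Checkerboard-color the complement of the immersed curve $K_j$ in $S^2$, i.e.\ color each region by the parity of the winding number about a point, or equivalently by the parity of the number of crossings of $K_j$ along a generic path from a fixed base region. This coloring is well defined, and it changes color exactly when one crosses an arc of $K_j$ transversally. Now traverse $K_i$ once, starting at a point off $K_j$. Each crossing with $K_j$ switches the color, and we must return to the starting color, so the number of such crossings is even.

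The only delicate point is that the checkerboard coloring of the complement of an immersed (non-simple) curve is well defined. This is standard: any closed loop in general position meets the closed curve $K_j$ an even number of times, which is again the homological statement on $S^2$. I would cite that fact rather than reprove it, and then conclude that $|\hat{f}(K_i)| = \sum_{j\neq i} \#(K_i\cap K_j)$ is even.
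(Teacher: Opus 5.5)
Your argument is correct. The paper does not prove this proposition itself; it quotes it from \cite{SSS}, so there is no in-paper proof to match. The closest reasoning in the text is the proof of Proposition~\ref{prop-lk-Phi}. There $K_1$ is assumed simple, and the Jordan curve theorem forces $K_2$ to enter and exit a disk alternately.

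Your reduction $|\hat{f}(K_i)|=\sum_{j\neq i}\#(K_i\cap K_j)$ is right, since traversing $K_i$ passes through each crossing with another component exactly once. The mod-$2$ intersection argument via $H_1(S^2;\mathbb{Z}/2)=0$ then handles self-crossings of $K_j$ in one line. It also shows exactly where $S^2$ is used: on a torus, a meridian and a longitude meet once.

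Your elementary route extends the paper's enter/exit argument to a non-simple $K_j$. However, the justification you give for the coloring being well defined is circular. You cite the fact that every generic closed loop meets $K_j$ an even number of times, which is the very statement being proved (take the loop to be $K_i$). As written, that route adds nothing independent.

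The winding-number definition you mention fixes this:
\begin{itemize}
\item Remove a point of $S^2$ lying off $K_i\cup K_j$, so that $K_j$ becomes a plane curve.
\item Its winding number about points of the complement is then well defined without any parity input.
\item That winding number changes by $\pm1$ whenever one crosses an arc of $K_j$ away from its double points.
\end{itemize}
Alternatively, use that the image of $K_j$ is a plane graph all of whose vertices have even degree, and the faces of such a graph are $2$-colorable.

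Finally, no perturbation is needed. A link diagram already has only transverse double points and no triple points, which is all either argument uses.
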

\medskip

In the rest of this section, we focus on two-component links. 
Recall that $\# O(w)$, $\# U(w)$ denote the number of $O$s, $U$s in a sequence $w$, respectively. 
For the relation to the linking number $lk(D)$, the following lemma was shown in \cite{SSS}. 

\medskip 
\begin{lemma}[\cite{SSS}]
For any oriented diagram $D=K_1 \cup K_2$ with $\hat{f}(D)=(w_1, w_2)$, the following hold. 
\begin{itemize}
\item[$\bullet$] $| lk(D) | \leq \min \{ \# O(w_1), \# U(w_1) \}$,  
\item[$\bullet$] $lk(D) \equiv \# O(w_1) \pmod{2}$.
\end{itemize}
\label{lem-lk-mod}
\end{lemma}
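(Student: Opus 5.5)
The plan is to prove both items by expressing $lk(D)$ through the signs of the crossings where $K_1$ passes over. Write $n=\#O(w_1)$ and $m=\#U(w_1)$. Every non-self crossing is an over-crossing for exactly one of $K_1$ and $K_2$. So $n$ counts the non-self crossings at which $K_1$ is over, and $m$ counts those at which $K_1$ is under (that is, $K_2$ is over).

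The key step is the standard fact that the linking number can be computed from one class of crossings alone:
\[
lk(D)=\sum_{c\in C_O}\varepsilon(c)=\sum_{c\in C_U}\varepsilon(c),
\]
where $C_O$ (resp. $C_U$) is the set of non-self crossings where $K_1$ is over (resp. under), and $\varepsilon(c)=\pm1$ is the sign of $c$. To justify this, change all crossings in $C_O$ so that $K_1$ lies entirely below $K_2$. The resulting diagram represents a split link, so its linking number is $0$. On the other hand, by the usual half-sum formula the new linking number is
\[
\tfrac12\Bigl(\sum_{C_U}\varepsilon-\sum_{C_O}\varepsilon\Bigr).
\]
Hence $\sum_{C_O}\varepsilon=\sum_{C_U}\varepsilon$. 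Combining this with $lk(D)=\tfrac12\bigl(\sum_{C_O}\varepsilon+\sum_{C_U}\varepsilon\bigr)$ gives the displayed identity.

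Both bullet points then follow directly.
\begin{itemize}
\item[$\bullet$] \textbf{Bound.} Each sum has $n$ (resp. $m$) terms of absolute value $1$. Therefore $|lk(D)|\le n$ and $|lk(D)|\le m$, which gives $|lk(D)|\le\min\{\#O(w_1),\#U(w_1)\}$.
\item[$\bullet$] \textbf{Parity.} A sum of $n$ terms each equal to $\pm1$ is congruent to $n$ modulo $2$. Hence $lk(D)\equiv\#O(w_1)\pmod 2$.
\end{itemize}

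The only step needing care is the splitness claim after the crossing changes. When $K_1$ is everywhere below $K_2$ at non-self crossings, we can push $K_1$ to a lower height level and $K_2$ to an upper one; self-crossings are unaffected by this. The two components then lie in disjoint half-spaces, so the link is split and its linking number is $0$. Alternatively, one can avoid this geometric argument by citing the standard result that the linking number equals the signed count of crossings where one fixed component passes over the other.
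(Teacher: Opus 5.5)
Your proof is correct and complete. The paper itself gives no proof of this lemma, since it is quoted from \cite{SSS}, so there is no in-paper argument to match against.

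What you give is the standard one-sided count $lk(D)=\sum_{c\in C_O}\varepsilon(c)=\sum_{c\in C_U}\varepsilon(c)$. You derive it cleanly: changing the crossings in $C_O$ flips exactly their signs, and the resulting diagram is split. Both bullets then follow by counting terms of modulus one.

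For comparison, tools the paper uses elsewhere would give the first bullet by a different chain. One combines the linking-number bound $|lk(L)|\le sp(L)$, invoked in the footnote to the proof of Theorem~\ref{thm-ch4}, with Proposition~\ref{prop-sp}, that is, $sp(L)\le ld(D)=\min\{\#O(w_1),\#U(w_1)\}$ from \cite{Sw}. That route relies on two external inequalities and gives no parity information. Your identity is self-contained and yields the bound and the congruence at once.

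Your splitness justification is exactly the step that needs checking, and it holds. You place $K_1$ and $K_2$ in disjoint horizontal slabs and realize the self-crossings within each slab.
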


\begin{example}
Let $D$ be an oriented diagram of a two-component link with $\hat{f}(D)=(OUO^2U^4, O^5U^3)$. 
From Lemma \ref{lem-lk-mod}, we infer that $lk(D)= \pm1$ or $\pm3$. 
\end{example}
\medskip

The ``linking warping degree'', $ld(D)$, of a two-component link diagram $D=K_1 \cup K_2$ is defined in \cite{Sw} as $ld(D)= \min \{ \# U(K_1), \# U(K_2) \}$, where $\# U(K_i)$ indicates the number of non-self crossings of $D$ at which $K_i$ is under. 
The linking warping degree is a restricted version of the ``warping degree'' (\cite{K}) of oriented link diagrams which measures a complexity of link diagrams. 
For oriented diagrams $D$ of two-component links, we have $ld(D)= \min \{ \# O(w_1), \# U(w_1) \}$ for $\hat{f}(D)=(w_1, w_2)$ by definition. 

The {\it splitting number}\footnote{This splitting number is different from the ``splitting number'' studied in \cite{A}, \cite{CCC}, \cite{Sc}, etc., which permits self-crossing changes as well. 
It is also called the ``weak splitting number'' (\cite{CCC, LI}) and denoted by $wsp(L)$ to avoid confusion. }, $sp(L)$, of a link $L$ is the minimal number of crossing changes {\it between different components} that are required to obtain a split link from $L$. 
From the inequality $sp(L) \leq ld(D)$ shown in \cite{Sw}, we obtain the following proposition. 

\medskip
\begin{proposition}
For any oriented diagram $D$ of a two-component link $L$ with $\hat{f}(D)=(w_1, w_2)$, we have 
$sp(L) \leq \min \{ \# O(w_1), \# U(w_1) \}$. 
\label{prop-sp}
\end{proposition}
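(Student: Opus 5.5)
The plan is to chain the inequality $sp(L) \leq ld(D)$ from \cite{Sw} with the identity $ld(D)=\min\{\#O(w_1),\#U(w_1)\}$ noted just before the statement. Both ingredients are available, so the proof should only need to verify the identity carefully and then combine the two.

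\textbf{Step 1: verify the identity.} Let $D=K_1\cup K_2$ with $\hat{f}(D)=(w_1,w_2)$, and let $n$ be the number of non-self crossings of $D$.

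Every non-self crossing of $D$ is met exactly once when traversing $K_1$, and exactly once when traversing $K_2$. At each such crossing, exactly one of the two components is under. Hence $\#U(K_1)=\#U(w_1)$, and $\#U(K_2)=\#O(w_1)$: a crossing where $K_2$ is under is exactly a crossing where $K_1$ is over. Therefore
\[
ld(D)=\min\{\#U(K_1),\#U(K_2)\}=\min\{\#O(w_1),\#U(w_1)\}.
\]

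\textbf{Step 2: recall why $sp(L)\le ld(D)$ holds.} Although this can simply be cited from \cite{Sw}, the reason is short. Suppose $\#U(K_1)\le \#U(K_2)$. Change every non-self crossing at which $K_1$ is under; this uses $\#U(K_1)=ld(D)$ crossing changes between different components. Afterwards $K_1$ lies over $K_2$ at every crossing between them. We can then lift $K_1$ to a level above $K_2$ and separate the two by a sphere, so the resulting link is split. This gives $sp(L)\le ld(D)$.

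\textbf{Step 3: combine.} Steps 1 and 2 together give $sp(L)\le \min\{\#O(w_1),\#U(w_1)\}$.

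I expect no real obstacle. The only point needing care is the bookkeeping in Step 1, namely that the over/under counts in $w_1$ account for every non-self crossing exactly once and are complementary to those seen along $K_2$. The argument does not depend on the orientation chosen.
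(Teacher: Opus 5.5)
Your proposal is correct and matches the paper's route: the paper also obtains the statement by combining the inequality $sp(L)\le ld(D)$ from \cite{Sw} with the identity $ld(D)=\min\{\#O(w_1),\#U(w_1)\}$, which it notes holds by definition. Your Step 2 only adds a sketch of why the cited inequality holds (change the crossings where $K_1$ is under, then lift $K_1$ above $K_2$), which the paper simply cites.
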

\medskip 

\noindent Lemma \ref{lem-lk-mod} and Proposition \ref{prop-sp} are effective to characterize the non-self OU sequences for individual links. 
The splitting number has been actively studied and various techniques and lower bounds have been developed (see, for example, \cite{BS, CFP, CCZ, L}). 
The values of this invariant have been determined for all prime links with crossing number up to nine\footnote{The weak splitting number $wsp(L)$ has also been determined for all prime links with crossing number up to nine (see \cite{CCC, LI}).} (see \cite{CFP, LI}). 

For larger crossing numbers, a number of links have also been determined. 
For example, the splitting numbers of the links $L12n1367$, $L11a372$ are determined to be 3, 5 in \cite{BS}, \cite{CFP}, respectively (see also \cite{CCZ}). 
By Proposition \ref{prop-sp}, we can estimate the number of occurrences of $O$s and $U$s in the non-self OU sequences.

\medskip 
\begin{example}
Applying this estimation to the specific links mentioned above, we have the following. 
\begin{itemize}
\item[$\bullet$] For any oriented diagram $D=K_1 \cup K_2$ of the link $L12n1367$ with $\hat{f}(D)=(w_1, w_2)$, we have $\# O(w_1), \# U(w_1) \geq 3$. 
\item[$\bullet$] For any oriented diagram $D=K_1 \cup K_2$ of the link $L11a372$ with $\hat{f}(D)=(w_1, w_2)$, we have $\# O(w_1), \# U(w_1) \geq 5$. 
\end{itemize}
Note that the linking numbers of these links are 1 with a suitable orientation. 
\end{example}
\medskip 

\noindent We prove Theorem \ref{thm-ch4}. 

\medskip 
\begin{proof}[Proof of Theorem \ref{thm-ch4}]
Without loss of generality, we assume that $n<m$. 

Let $w=OUO \dots U$ be an alternating sequence, namely, a cyclic sequence with no consecutive ``$OO$'' or ``$UU$'', of length $2n$. 
We obtain that $(w,w) \in \hat{f}(\mathcal{L}^{T(2,2n)})$ since the minimal crossing diagram $D_n$ of $T(2,2n)$ satisfies $\hat{f}(D_n)=(w,w)$. 
Note that the splitting number of $T(2, 2m)$ is $m$.\footnote{By the ``linking number bound'' for two-component links (see, for example, \cite{CCZ}), we have the lower bound $m = | lk(T(2,2m))| \leq sp(T(2,2m))$. Furthermore, the minimal crossing diagram $D_m$ of $T(2,2m)$ provides the upper bound $sp(T(2,2m)) \leq ld(D_m)=m$. Thus, $sp(T(2, 2m)) = m$.}
By Proposition~\ref{prop-sp}, we have $\# O(w_1), \# U(w_1) \geq m$ for any diagram $D$ of $T(2,2m)$ with $\hat{f}(D)=(w_1, w_2)$. 
Then we obtain $(w,w) \not\in \hat{f}(\mathcal{L}^{T(2,2m)})$ and therefore $\hat{f}(\mathcal{L}^{(T(2,2n)}) \neq \hat{f}(\mathcal{L}^{T(2,2m)})$. 
\end{proof}

\subsection{Well-balanced pairs of sequences}
\label{subsection-well}

In this subsection, we study well-balanced pairs of cyclic sequences and prove Theorem \ref{thm-ch}. 
The following lemma corresponds to the statement (I) in Theorem~\ref{thm-ch}. 

\medskip 
\begin{lemma}
Let $(w_1, w_2)$ be a pair of cyclic sequences of $O$ and $U$. 
Then $(w_1, w_2)$ is realizable as $(w_1, w_2) = \hat{f}(D)$ for some oriented diagram $D$ of a two-component link if and only if $(w_1, w_2)$ is well-balanced. 
\label{lem-1}
\end{lemma}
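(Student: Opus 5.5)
Necessity is immediate from what we already have. If $D=K_1\cup K_2$ and $\hat f(D)=(w_1,w_2)$, then condition (i) of Definition~\ref{def-well} is Proposition~\ref{prop-length}. For condition (ii), note that every non-self crossing of $D$ is met exactly once when traversing $K_1$ and exactly once when traversing $K_2$. A crossing where $K_1$ is over is recorded as $O$ in $w_1$ and as $U$ in $w_2$, and conversely. Hence $\#O(w_1)=\#U(w_2)$ and $\#U(w_1)=\#O(w_2)$.

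Sufficiency will be proved by an explicit construction. Let $(w_1,w_2)$ be well-balanced with common length $2n$, and put $p=\#O(w_1)=\#U(w_2)$ and $q=\#U(w_1)=\#O(w_2)$, so $p+q=2n$.

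We start from a standard diagram with $2n$ non-self crossings. Take $K_1$ a round circle and $K_2$ a curve that winds in and out of it, crossing $K_1$ at $2n$ points. For instance, $K_2$ can be a ``flower'' with $n$ petals poking outside the circle. Both components then visit the $2n$ crossings in the same cyclic order $c_1,\dots,c_{2n}$ (up to reversal, which we fix by choosing the orientation of $K_2$). Any choice of over/under data at the $c_i$ is allowed, so $w_1$ can be prescribed arbitrarily. However, $w_2$ is then forced to be the complement of $w_1$ read in the same order. In general this is not the target $w_2$, which only has the right letter counts.

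The fix is to decouple the order in which $K_2$ visits the crossings from the order along $K_1$, at the cost of adding self-crossings of $K_2$. Self-crossings are invisible in $\hat f$. Concretely, we realize any permutation $\sigma$ of the crossings: draw $K_1$ as a circle and let $K_2$ pass through the crossing points $c_{\sigma(1)},c_{\sigma(2)},\dots$ in that order, alternately going inside and outside. Between consecutive visits, $K_2$ travels in the disk or in the annulus region as needed. Its arcs may cross each other, but they never cross $K_1$ again. For this to be consistent, $K_2$ must alternate inside/outside, which means consecutive crossings in the $K_2$-order must alternate in a parity class. This is why we insist that the $c_i$ alternate in/out along $K_1$ as well.

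Now pick $\sigma$ so that the complements of the letters of $w_1$, read in the $\sigma$-order, spell $w_2$. This requires matching the $U$'s of $w_1$ (there are $q$) with the $O$'s of $w_2$, and the $O$'s of $w_1$ with the $U$'s of $w_2$, while preserving the parity constraint (odd positions to odd positions, say, or odd to even). The parity constraint is the \textbf{main obstacle}. The letter counts per parity class need not agree: $w_1$ may have all its $O$'s at odd positions while $w_2$'s $U$'s are spread out.

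To remove the constraint, I would avoid parity altogether. Let $K_2$ pass each crossing point transversally, and connect consecutive visits by arcs that cross $K_1$ only at the designated points. Connecting arcs may cross each other freely, giving self-crossings of $K_2$. The genuine requirement is then just that every entry into the disk is followed by an exit. This is a global condition, and it is automatically satisfied since $2n$ is even. Locally, a $K_2$-arc leaving $c_{\sigma(i)}$ on one side must reach $c_{\sigma(i+1)}$ from that same side. If the side does not match the required transverse direction at $c_{\sigma(i+1)}$, I would insert a small kink of $K_1$, using a self-crossing of $K_1$ (a Reidemeister~I loop), to flip the local side. Alternatively, I would let $K_2$ wrap around by crossing $K_1$ twice, then remove those two extra crossings by an R2 pair. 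Since extra non-self crossings are not allowed, the kink approach is the one I would try first.

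If this becomes messy, a cleaner route is induction on $n$ using local moves that change $\hat f$ by an adjacent transposition in $w_2$ while fixing $w_1$. One such move slides a strand of $K_2$ past another strand of $K_2$ near $K_1$ using a self-crossing. Any two arrangements of a fixed multiset of letters are related by adjacent transpositions, which finishes the argument.
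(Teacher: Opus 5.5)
Your necessity argument is the paper's. For sufficiency, your paragraph beginning ``To remove the constraint'' is exactly the paper's Construction~1. There $K_1$ is a simple closed curve carrying the letters of $w_1$, and $K_2$ passes through these points in an order dictated by $w_2$, crossing $K_1$ nowhere else. Self-crossings of $K_2$ are allowed, and closure is guaranteed because the length is even.

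That paragraph already completes the proof. The ``parity obstacle'' and the ``required transverse direction at $c_{\sigma(i+1)}$'' that you try to repair around it are not real. $\hat f$ records only over/under, never the direction in which $K_2$ crosses $K_1$; that direction only decides the sign of the crossing. In the free construction the direction is forced by the parity of the crossing along $K_2$, i.e.\ entering versus leaving a fixed disk bounded by $K_1$. It has no link to the crossing's position along $K_1$. Since any two points of the circle $K_1$ can be joined by an arc in either complementary disk, every bijection $\sigma$ is realizable, namely any $\sigma$ sending the $U$s of $w_1$ to the $O$s of $w_2$ and the $O$s of $w_1$ to the $U$s of $w_2$. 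The coupling with parity along $K_1$ came only from fixing the flower diagram first. This forced direction is also why the paper can later show $|lk(D)|=\Phi(w_2)$ for such diagrams (Proposition~\ref{prop-lk-Phi}).

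So delete the kink, R2 and transposition detours. The kink would not work as described anyway. An R1 loop of $K_1$ bounds a monogon whose whole boundary lies on $K_1$, so a strand of $K_2$ entering it would have to cross $K_1$ again to get out, adding a non-self crossing.
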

\medskip 

\begin{proof}
Let $(w_1, w_2)$ be a pair of cyclic sequences of $O$ and $U$. 
\begin{itemize}
\item[($\Rightarrow$): ] Suppose that $(w_1, w_2)=(\hat{f}(K_1), \hat{f}(K_2))$ for a two-component link diagram $D=K_1 \cup K_2$. 
\begin{itemize}
\item[(1):] The length of $w_1 = \hat{f}(K_1)$ is an even number by Proposition \ref{prop-length}. 
\item[(2):] A non-self crossing $p$ is an over-crossing (resp. under-crossing) of $K_1$ if and only if $p$ is an under-crossing (resp. over-crossing) of $K_2$. 
This implies that $\# O(w_1)= \# U(w_2)$ (resp. $\# U(w_1)= \# O(w_2)$). 
\end{itemize}
Therefore, $(w_1, w_2)$ is well-balanced. 
\item[($\Leftarrow$): ] Suppose that $(w_1, w_2)$ is well-balanced. 
Then $w_1 = \emptyset$ if and only if $w_2 = \emptyset$ by the condition (ii). 
Any diagram $D=K_1 \cup K_2$ without non-self crossings satisfies $(\hat{f}(K_1), \hat{f}(K_2))=(\emptyset , \emptyset )$. 
Suppose that $(w_1, w_2) \neq (\emptyset, \emptyset)$. 
We construct a diagram $D=K_1 \cup K_2$ with $\hat{f}(D)=(w_1, w_2)$ in the following 2 steps. 
\begin{itemize}
\item[Step 1. ] Take $\# O(w_1) + \# U(w_1)$ points on an oriented simple closed curve $C$ on $S^2$ and assign the crossing information according to $w_1$. 
Note that $C$ has $\# O(w_1)$ over crossings and $\# U(w_1)$ under crossings. 
\item[Step 2. ] Connect the crossings on $C$ by a curve that crosses $C$ transversely, according to $w_2$ without producing extra non-self crossings, while self-crossings may be produced. 
We can draw a curve that realizes the crossing information of $w_2$ because of the condition (ii). 
We can also draw a closed curve by condition (i). 
\end{itemize}
We refer to this procedure as {\it Construction 1}. 
Thus, we obtain a diagram $D$ such that $\hat{f}(D)=(w_1, w_2)$. 
\end{itemize}
\end{proof}
\medskip 

\noindent See Example \ref{ex-const-w1} for Construction 1. 

\medskip 
\begin{example}
Let $(w_1, w_2)=(O^2UO^2U, \ OU^3OU)$. 
Construction 1 is shown in Figure \ref{fig-const-w1}. 
The lower-right diagram $D=K_1 \cup K_2$ in Figure \ref{fig-const-w1} satisfies $\hat{f}(D)=(w_1, w_2)$ with arbitrary crossing information at the self-crossing of $K_2$. 
\begin{figure}[ht]
\centering
\includegraphics[width=12cm]{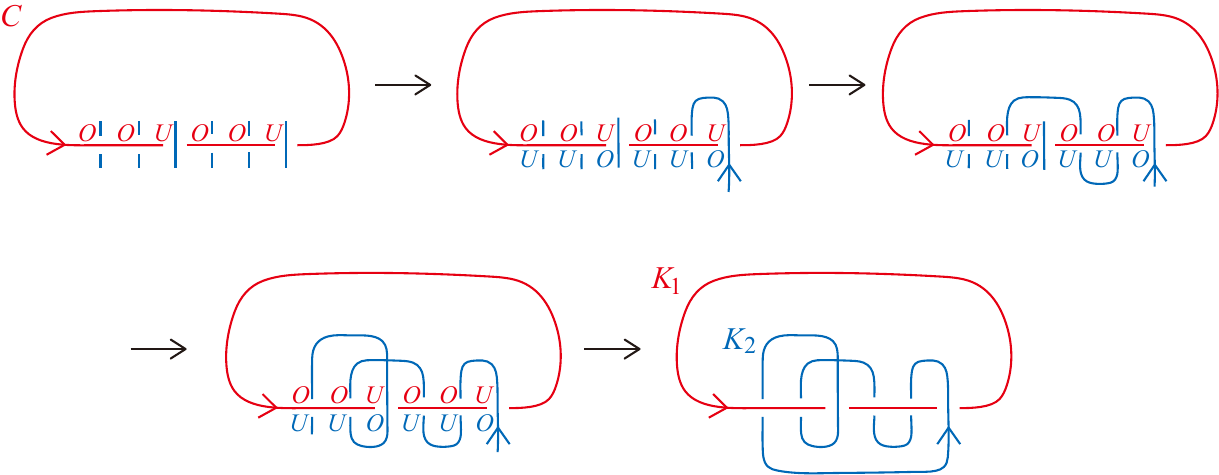}
\caption{Construction 1. The upper-left shows Step 1 and the rest show Step 2.}
\label{fig-const-w1}
\end{figure}
\label{ex-const-w1}
\end{example}
\medskip 

\noindent ``Construction 2'', the method of creating a diagram based on $w_2$, is also effective, as shown in the following example. 

\medskip 
\begin{example}
Let $(w_1, w_2)=(O^2UO^2U, \ OU^3OU)$. 
Diagrams $D=K_1 \cup K_2$ with $\hat{f}(D)=(w_1, w_2)$ are obtained by Construction 2 as shown in Figure \ref{fig-const-w2}. 
Note that a diagram obtained by Construction 1 or 2 is not unique. 
\begin{figure}[ht]
\centering
\includegraphics[width=11cm]{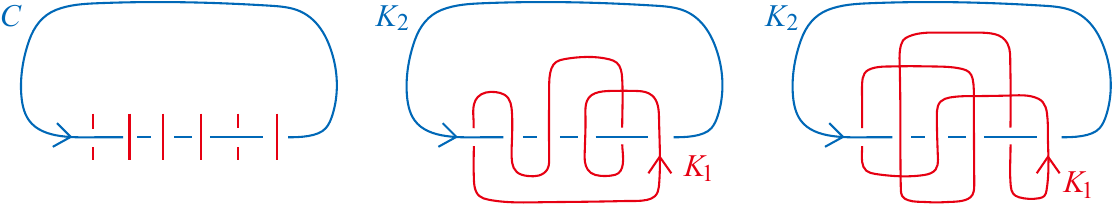}
\caption{Construction 2. The diagram on the left-hand side shows Step 1. The other two diagrams show results of Step 2 with arbitrary crossing information at the self-crossings of $K_1$. }
\label{fig-const-w2}
\end{figure}
\end{example}
\medskip 

\noindent In Section \ref{section-OU-number}, we study ``OU number'' of non-self OU sequences, and show that the linking number of any diagram that is obtained by Construction 1 (resp. 2) for $(w_1, w_2)$ coincides with the OU number of $w_2$ (resp. $w_1$). 
Now we prove Theorem \ref{thm-ch}. \\

\noindent {\it Proof of Theorem \ref{thm-ch}.} \ 
Statement (I) follows immediately from Lemma \ref{lem-1}. 
Statements (II) and (III) follow from (I) and Lemma \ref{lem-lk-mod}. 
\hfill$\square$  \\

\section{The OU number and linking number}
\label{section-OU-number}

The OU number is an essential concept to prove Theorem \ref{thm-ch2}. 
In this section, we review and study the OU number.

\subsection{The OU number of sequences}

The OU number of non-cyclic sequences is defined in \cite{SSS} as follows.  

\medskip 
\begin{definition}[\cite{SSS}]
Let $S$ be a non-cyclic sequence of $O$ and $U$. 
Let $N^O_o$ (resp. $N^O_e$) be the number of $O$s in $S$ that are in odd (resp. even) position. 
Let $N^U_o$ (resp. $N^U_e$) be the number of $U$s in $S$ that are in odd (resp. even) position. 
The {\it OU number of $S$}, denoted by $\Phi (S)$, is the value $\frac{1}{2}(N^O_e + N^U_o - N^O_o - N^U_e)$. 
\end{definition}
\medskip 

\noindent The OU number of cyclic sequences is defined as follows. 

\medskip 
\begin{definition}[\cite{SSS}]
Let $w$ be a cyclic sequence of $O$ and $U$ of even length. 
The {\it OU number of $w$}, denoted by $\Phi (w)$, is defined by $\Phi (w)= | \Phi (S)|$, where $S$ is a non-cyclic sequence obtained from $w$ by choosing any initial letter. 
\end{definition}
\medskip 

\begin{example}
We have $\Phi (S)=-1$ for a non-cyclic sequence $S=OUOUUO$ since $S$ has $N^O_e = N^U_o =1$ and $N^O_o = N^U_e =2$. 
We have $\Phi (w)=1$ for the cyclic sequence $w=OUOUUO$. 
\end{example}
\medskip 

\noindent By definition, we have the following. 

\medskip 
\begin{proposition}[\cite{SSS}]
Let $w$ be a cyclic sequence of $O$ and $U$ with an even length $m$. 
Then $\Phi (w) \leq \frac{m}{2}$. 
The equality holds if and only if $w$ is an alternating sequence. 
\label{prop-OU-length}
\end{proposition}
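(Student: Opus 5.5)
Fix a starting letter and write $S=s_1s_2\cdots s_m$ with $m$ even. Assign to each position $i$ the value $\varepsilon_i\in\{+1,-1\}$: $\varepsilon_i=+1$ if $s_i$ is an $O$ in an even position or a $U$ in an odd position, and $\varepsilon_i=-1$ if $s_i$ is an $O$ in an odd position or a $U$ in an even position. By the definition of the OU number, $\Phi(S)=\frac12\sum_{i=1}^m \varepsilon_i$. The triangle inequality then gives $|\Phi(S)|\le \frac m2$. Since $\Phi(w)=|\Phi(S)|$, the inequality $\Phi(w)\le \frac m2$ follows immediately.

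For the equality case, $\Phi(w)=\frac m2$ holds exactly when all $\varepsilon_i$ are equal: either all equal to $+1$ or all equal to $-1$. Suppose all $\varepsilon_i=+1$. Then every odd position carries $U$ and every even position carries $O$, so $S=UOUO\cdots UO$. Suppose all $\varepsilon_i=-1$. Then $S=OUOU\cdots OU$. In both cases consecutive letters differ. Because $m$ is even, the last letter also differs from the first, so the cyclic sequence $w$ contains no $OO$ or $UU$ and is alternating.

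Conversely, let $w$ be alternating. Any choice of initial letter gives a linear sequence $S$ of the form $OUOU\cdots OU$ or $UOUO\cdots UO$. This uses that $m$ is even, so the cyclic wrap-around does not break the alternation. All $\varepsilon_i$ are then equal, and $|\Phi(S)|=\frac m2$.

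The only subtle point is that $\Phi(w)$ is well defined, i.e.\ independent of the chosen starting letter; this is implicit in the definition and taken as given. It can be checked directly anyway. Moving the initial letter to the end shifts every remaining position by one, which flips the parity of each remaining position and so negates all of their $\varepsilon_i$. The moved letter goes from position $1$ (odd) to position $m$ (even), so its $\varepsilon$ is negated as well. Hence $\Phi$ changes sign, and $|\Phi|$ is unchanged. Apart from this point, the argument is a direct count.
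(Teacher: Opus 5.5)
Your proof is correct and takes essentially the paper's approach: the paper gives no argument beyond saying the proposition follows from the definition, and your rewriting $\Phi(S)=\frac12\sum_i\varepsilon_i$, with equality forcing $S=OUOU\cdots OU$ or $UOUO\cdots UO$ (and $m$ even closing up the cyclic alternation), is exactly that direct count made explicit. Your check that rotating the starting letter negates $\Phi(S)$, which again uses that $m$ is even, is a valid addition that the paper leaves implicit.
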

\medskip 

\noindent The following proposition was also shown in \cite{SSS}. 

\medskip 
\begin{proposition}[\cite{SSS}]
Let $w$ be a cyclic sequence of $O$ and $U$ of even length. 
We have $\Phi (w) \equiv \# O(w)  \pmod{2}$.
\label{prop-Phi-O}
\end{proposition}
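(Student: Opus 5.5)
The plan is to prove Proposition~\ref{prop-Phi-O} directly from the definition of $\Phi(S)$ for a linear representative $S$ of $w$. Since $\Phi(w)=|\Phi(S)|$ and taking absolute values does not change parity, it suffices to show that $\Phi(S)\equiv \#O(w) \pmod 2$ for one (hence every) choice of initial letter. So fix $S$ of even length $m$ and work with the four counts $N^O_o, N^O_e, N^U_o, N^U_e$.

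First I will record the two linear relations coming from the even length. Positions $1,\dots,m$ split into exactly $m/2$ odd and $m/2$ even positions, so
\[
N^O_o+N^U_o=\tfrac{m}{2}=N^O_e+N^U_e .
\]
Substituting $N^U_o=\tfrac m2-N^O_o$ and $N^U_e=\tfrac m2-N^O_e$ into the definition gives
\[
\Phi(S)=\tfrac12\bigl(N^O_e+\tfrac m2-N^O_o-N^O_o-\tfrac m2+N^O_e\bigr)=N^O_e-N^O_o .
\]
Hence $\Phi(S)$ is an integer and $\Phi(S)\equiv N^O_e+N^O_o=\#O(w)\pmod 2$, which gives the claim.

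As a side check, this formula also shows that $\Phi(w)$ is well defined. Cyclically shifting $S$ by one letter (for even $m$) swaps the parities of all positions, which changes $\Phi(S)$ only by a sign, so $|\Phi(S)|$ does not depend on the initial letter. There is no real obstacle. The only point needing care is to use the even length explicitly, because that is what makes the half in the definition disappear and reduces $\Phi(S)$ to the difference $N^O_e-N^O_o$ of two counts of $O$s.
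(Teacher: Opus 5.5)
Your proof is correct. Using $N^O_o+N^U_o=N^O_e+N^U_e=\tfrac m2$ to rewrite $\Phi(S)=N^O_e-N^O_o$ gives integrality and $\Phi(S)\equiv N^O_e+N^O_o=\#O(w)\pmod 2$ at once. Your remark that a one-step cyclic shift flips every parity, since $m$ is even, correctly shows that $|\Phi(S)|$ does not depend on the starting letter.

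The paper itself gives no proof here; it quotes the statement from \cite{SSS}. The route its surrounding text suggests is via reductions. Deleting $OO$ or $UU$ changes $\#O$ by $0$ or $2$, and $w$ reduces to an alternating sequence of length $2\Phi(w)$ containing exactly $\Phi(w)$ letters $O$, so $\Phi(w)\equiv\#O(w)\pmod 2$. That argument is short, but it relies on two further facts imported from \cite{SSS}: $\Phi$ is invariant under reductions, and the reduced alternating form has length $2\Phi(w)$. Yours uses only the definition.

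Your closed formula also gives more. It proves the reduction invariance directly: deleting $OO$ at positions $i,i+1$ (choosing the starting letter so the pair does not wrap around) removes one odd-position and one even-position $O$, and shifts all later positions by $2$. It also yields Proposition~\ref{prop-OU-length} immediately. Indeed, $|N^O_e-N^O_o|\le\tfrac m2$, with equality exactly when one parity class consists entirely of $O$s and the other entirely of $U$s, i.e.\ when $w$ is alternating.
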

\medskip 

\noindent When $(w_1, w_2)$ is well-balanced, it follows that $\# O(w_1) \equiv \# U(w_1) \equiv \# O(w_2) \equiv \# U(w_2) \equiv \Phi (w_1) \equiv \Phi (w_2) \pmod{2}$ from Proposition \ref{prop-Phi-O}. 

\medskip 
\begin{example}
For the diagram $D=K_1 \cup K_2$ in Figure \ref{fig-whitehead}, the pair $(w_1, w_2)=( \hat{f}(K_1), \hat{f}(K_2))=(OUOU, OOUU)$ is well-balanced and we have $\#O (w_1)= \#U(w_1)= \#O(w_2)= \# U(w_2)=\Phi (w_1)=2$, $\Phi (w_2)=0$. 
\end{example}

\medskip 
\begin{definition}[\cite{SSS}]
For a cyclic or non-cyclic sequence of $O$ and $U$, a {\it reduction} is an operation removing ``$OO$'' or ``$UU$''. 
\end{definition}
\medskip 

\noindent As shown in \cite{SSS}, the value of the OU number is preserved under the reduction, and the OU number can also be calculated as follows. 
For a cyclic sequence $w$ of $O$ and $U$ of even length, apply reductions in any order to obtain an alternating sequence $w'$. 
The OU number of $w$ is the number of the pairs ``$OU$'' in $w'$. 

\medskip 
\begin{example}
For a cyclic sequence $w=OUOUUO$, we can calculate the OU number by reductions as $\Phi (OUO\boldsymbol{UU}O)= \Phi (OU\boldsymbol{OO}) = \Phi (OU)=1$.
\end{example}
\medskip 

\noindent In the proof of Lemma \ref{lem-trivial} in Section \ref{section-proof-ch2}, we give a method to create a diagram of a trivial link for a given well-balanced pair $(w_1, w_2)$ with $\# O(w_1) \equiv 0 \pmod{2}$. 
In the method, we use reductions, and the obtained alternating sequence forms a core part of the diagram. 
The following lemma will be used in the proof of Lemma \ref{lem-trivial} where we need some ``$OU$'' pairs that survive reductions. 

\medskip 
\begin{lemma}
Let $w$ be a cyclic sequence of $O$ and $U$ of even length containing both symbols. 
\begin{enumerate}[label=(\arabic*)]
\item There are at least $\Phi (w)$ ``$OU$'' pairs in $w$. 
\item There is a sequence of reductions to transform $w$ into an alternating sequence $w'$ of length $2 \Phi (w)$ such that $w$ has $\Phi (w)$ pairs of ``$OU$'' that remain in $w'$. 
\end{enumerate}
\label{lem-seq-1}
\end{lemma}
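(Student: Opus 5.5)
We argue by induction on the length $m$ of $w$, using the fact quoted above from \cite{SSS} that reductions preserve the OU number and that reducing to an alternating sequence yields exactly $\Phi(w)$ pairs ``$OU$''. In a cyclic sequence, an ``$OU$'' pair means an occurrence of the letter $O$ immediately followed by $U$ in the cyclic order. We prove (2) first, since (1) follows from it: the $\Phi(w)$ pairs of ``$OU$'' in $w$ that survive to $w'$ are pairwise distinct occurrences in $w$.

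\emph{Base case.} If $w$ is alternating, no reduction is possible. Then $w'=w$ has length $m=2\Phi(w)$ by Proposition~\ref{prop-OU-length}, and all $m/2$ of its ``$OU$'' pairs remain.

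\emph{Inductive step.} If $w$ is not alternating, it contains a block $OO$ or $UU$. The point is to remove such a block without destroying an adjacency $OU$ that we want to keep, and then to ensure that the pairs counted in the reduced sequence really come from adjacent $OU$ pairs of $w$. For this, choose a maximal run of equal letters of length at least $2$, say $O^k$ with $k\ge 2$, preceded by $U$ and followed by $U$. Such a run exists because $w$ contains both symbols. Remove the first two letters of the run, producing $w_1$. The pair formed by the last $O$ of the run and the following $U$ is untouched, and every other adjacency of $w$ outside the run is also preserved. The one new adjacency created in $w_1$ occurs only when $k=2$: the preceding $U$ then becomes adjacent to the following $U$, and this creates no new ``$OU$'' pair. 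Since $k\ge 2$, the run keeps at least one $O$ when $k\ge 3$; when $k=2$, the run disappears entirely and the two flanking $U$'s merge. The case of a $U$-run is symmetric.

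Thus every ``$OU$'' pair of $w_1$ is, letter for letter, an ``$OU$'' pair of $w$. Moreover, $w_1$ still has even length, and $\Phi(w_1)=\Phi(w)$ by reduction invariance. If $w_1$ is empty or uses only one symbol, then $\Phi(w)=0$; in that case we reduce to the empty sequence, and the statement holds trivially. Otherwise we apply the induction hypothesis to $w_1$. It gives a sequence of reductions from $w_1$ to an alternating $w'$ of length $2\Phi(w)$, together with $\Phi(w)$ surviving ``$OU$'' pairs of $w_1$. These are adjacent pairs of $w$ as well, because letters of $w_1$ are letters of $w$ and the adjacencies among them were inherited.

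The main obstacle is to check carefully that no ``$OU$'' adjacency in $w_1$ is spurious, that is, created by the deletion rather than present in $w$. Choosing to delete from inside a maximal run handles this: the only possible new adjacency joins letters of the same kind (in the case $k=2$), and so it cannot be an ``$OU$'' pair.
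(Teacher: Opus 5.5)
Your argument is essentially the paper's proof. You delete the first two letters of an $O$-run and induct, on the length rather than on the number of reductions; the two are equivalent because $\Phi$ is invariant. You then observe that the surviving pairs are inherited $OU$ pairs of $w$. You also correctly dispose of the case where the reduced sequence uses only one symbol, which the paper passes over.

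However, your check that no spurious $OU$ adjacency arises, which you identify as the main obstacle, contains two flaws.

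First, the claim that a new adjacency appears only when $k=2$ is false. For $k\ge 3$, deleting $O_1O_2$ from $U_aO_1O_2O_3\cdots$ makes $U_a$ adjacent to $O_3$. Your conclusion still holds, since the single new adjacency always begins with the letter $U_a$ preceding the run and so is never $OU$. The justification should say this rather than restrict to $k=2$.

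Second, the $U$-run case is not symmetric under merely exchanging the letters, because that exchange turns the pattern $OU$ into $UO$. The symmetry that preserves $OU$ also reverses the direction of reading. So for a $U$-run you must delete the \emph{last} two $U$'s, exactly as the paper specifies. Deleting the first two fails. For example, in $w=O_1U_2U_3U_4O_5U_6$ both $O$-runs have length one, so a $U$-run must be used. Removing $U_2U_3$ produces the surviving pair $O_1U_4$, which is not adjacent in $w$. Removing $U_3U_4$ instead keeps $O_1U_2$ and $O_5U_6$.
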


Before proving Lemma 3, we make a brief remark.
As shown in the following example, if we distinguish the letters in the original sequence, the letters that remain after the reduction of pairs depend on the choice of pairs selected for the reduction. 

\begin{example}
Let $w=OOOUOUUU$. 
Then $\Phi (w)=2$, and $w$ has two pairs of ``$OU$''. 
For convenience, we label the letters as $w=O_1 O_2 O_3 U_4 O_5 U_6 U_7 U_8$.

If we apply reductions at ``$O_2 O_3$'' and ``$U_6 U_7$'', we obtain an alternating sequence ``$O_1 U_4 O_5 U_8$''. 
Neither the ``$OU$'' pairs ``$O_1 U_4$'' nor ``$O_5 U_8$'' is consecutive in $w$. 
On the other hand, if we apply reductions at ``$O_1 O_2$'' and ``$U_7 U_8$'',  we obtain ``$O_3 U_4 O_5 U_6$''. 
Both ``$OU$'' pairs ``$O_3 U_4$'' and ``$O_5 U_6$'' are consecutive in $w$. 
\end{example}


\begin{proof}[Proof of Lemma~\ref{lem-seq-1}]
It is sufficient to prove (2) since (1) follows from (2). 
Let $w$ be a cyclic sequence of $O$ and $U$ of even length. 
Let $k$ be the number of reductions required to transform $w$ into an alternating sequence. 
Namely, $k= \frac{1}{2}(|w|-2 \Phi (w))$, where $|w|$ denotes the length of $w$. 
We prove the claim (2) by an induction on $k$. 

When $k=0$, this holds since $w$ is alternating. 
Assume that the claim holds when $k=l$. 
Suppose that $k=l+1$. 
Apply a reduction on $w$, where we remove the first and second $O$s in consecutive $O$s or the first and second $U$s from the last in consecutive $U$s. 
Then we obtain a sequence $w^*$ such that $k=l$. 
By assumption, $w^*$ has $\Phi (w^*)$ ($=\Phi (w)$) pairs of ``$OU$'' that remain in an alternating sequence obtained by a sequence of reductions. 
The same ``$OU$'' pairs are in the original sequence $w$, too.
\end{proof}

Note that the specific reduction method used in this proof will be applied later in the proof of Lemma~\ref{lem-trivial}.

\subsection{OU number for link diagrams}

In this subsection, we study the OU number for link diagrams. 

\medskip 
\begin{definition}[\cite{SSS}]
Let $D=K_1 \cup K_2 \cup \dots \cup K_r$ be an oriented link diagram. 
The {\it OU number of $K_i$}, denoted by $\Phi (K_i)$, is defined as $\Phi ( \hat{f}(K_i))$. 
\end{definition}
\medskip

\noindent It was shown in \cite{SSS} that the OU number does not depend on the orientation of the link diagram. 
Therefore, the OU number can be defined for unoriented link diagrams as well. 
We have the following proposition, which describes a geometrical meaning of the OU number. 

\medskip 
\begin{proposition}
Let $D=K_1 \cup K_2$ be an oriented link diagram such that $K_1$ has no self-crossings. 
Then $|lk(D)|=\Phi (K_2)$. 
\label{prop-lk-Phi}
\end{proposition}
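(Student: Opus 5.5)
Since $K_1$ has no self-crossings, it is a simple closed curve $C$ on $S^2$, and by the Jordan curve theorem it separates $S^2$ into two disks $A$ and $B$. I would traverse $K_2$ starting from a point in $A$ (if $K_2$ meets no non-self crossing then both sides vanish). Its non-self crossings $p_1, p_2, \dots, p_{2m}$, listed in traversal order, are exactly the points where $K_2$ crosses $C$, alternately from $A$ to $B$ and from $B$ to $A$. This reproves Proposition \ref{prop-length} here: the crossings in odd position go from $A$ into $B$, and those in even position go from $B$ into $A$. Let $S$ be the non-cyclic sequence obtained from $\hat{f}(K_2)$ with this initial letter, so $\Phi(K_2)=|\Phi(S)|$.

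The key step is to compute the sign of each crossing from its position and its letter. Fix the orientation of $C$ so that $A$ lies to its left. At $p_i$ the tangent of $K_1$ is fixed, and $K_2$ crosses either leftward (from $B$ to $A$, even $i$) or rightward (from $A$ to $B$, odd $i$). The crossing sign is determined by which strand is over together with the direction of the under-strand relative to the over-strand. Flipping the direction of $K_2$ or swapping over and under each reverses the sign. Hence, for a suitable global $\epsilon=\pm1$, the sign of $p_i$ equals $\epsilon$ when ($i$ odd, letter $U$) or ($i$ even, letter $O$), and equals $-\epsilon$ when ($i$ odd, letter $O$) or ($i$ even, letter $U$). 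Pinning down this bookkeeping with one explicit local picture is the only real check, and it is the step I expect to need the most care; the rest is formal.

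Summing the signs gives
\[
lk(D)=\tfrac12\sum_i \operatorname{sign}(p_i)=\tfrac{\epsilon}{2}\bigl(N^U_o+N^O_e-N^O_o-N^U_e\bigr)=\epsilon\,\Phi(S),
\]
and taking absolute values yields $|lk(D)|=\Phi(K_2)$. A different starting point or a different choice of orientation changes only $\epsilon$, which is consistent with the definition of $\Phi$ on cyclic sequences. Self-crossings of $K_2$ do not enter the computation at all, since only non-self crossings contribute to $lk$ and to $\hat f(K_2)$.
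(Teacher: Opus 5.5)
Your proposal is correct and is essentially the paper's argument: $K_1$ is a Jordan curve, and $K_2$ alternately enters and leaves the complementary region at its non-self crossings, so the sign of each crossing is determined by the parity of its position in $S$ together with its letter, and summing gives $lk(D)=\pm\Phi(S)$. The differences are cosmetic: the paper starts $S$ just before an over-crossing $p$ and uses the sign of $p$ to fix the global sign, treating the case with no $O$s or no $U$s separately via splitness, whereas your left/right normalization with $\epsilon$ handles all cases uniformly.
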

\medskip 

\begin{proof}
Let $D=K_1 \cup K_2$ be an oriented diagram such that $K_1$ has no self-crossings. 
If $K_2$ has either no non-self over-crossings or non-self under-crossings, then $D$ represents a split link by Proposition~\ref{prop-sp} and we obtain $lk(D)=0=\Phi (K_2)$. 

Assume that $K_2$ has both non-self over- and under-crossings. 
Take an over-crossing $p$ of $K_2$ and a base point $b$ on $K_2$ just before $p$. 
By traversing $K_2$ from the base point $b$, we obtain a non-cyclic sequence $S$ starting with $O$. 
Note that $S$ corresponds to $\hat{f}(K_2)$ by regarding $S$ as a cyclic sequence. 

Let $R$ be the region bounded by the simple closed curve $K_1$ that does not contain the base point $b$. 
When we traverse $K_2$ from $b$, we alternately enter and exit the region $R$ at the non-self crossings. 
Hence, each pair of non-self crossings whose corresponding letters in $S$ are in the same parity position have the same orientation of $K_2$ relative to $K_1$. 
For example, see Figure \ref{fig-lk-P} for a diagram with $S=OUUUOU$, or $S=O_1 U_2 U_3 U_4 O_5 U_6$ with labels for convenience. 
The orientation of $K_2$ at a non-self crossing that corresponds to a letter in the odd (resp. even) position in $S$ has upward (resp. downward) orientation. 
\begin{figure}[ht]
\centering
\includegraphics[width=10cm]{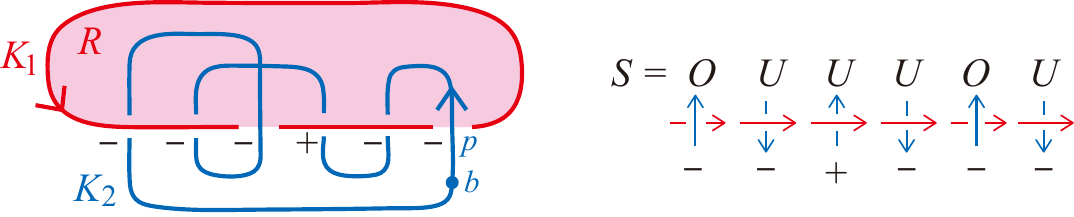}
\caption{A diagram $D=K_1 \cup K_2$ with $\hat{f}(D)=(w_1, w_2)=(OOUOOU, OUUUOU)$. 
With the indicated base point $b$, we have the non-cyclic sequence $S=OUUUOU$. 
We observe that $\Phi (S)= lk (D)= -2$. }
\label{fig-lk-P}
\end{figure}

Suppose that $p$ is a positive (resp.\ negative) crossing. 
Then each crossing corresponding to ``$O$'' in an odd position or ``$U$'' in an even position in $S$ is positive (resp.\ negative), while each crossing corresponding to ``$O$'' in an even position or ``$U$'' in an odd position is negative (resp.\ positive). 
Therefore, $lk(D)=- \Phi (S)$ (resp.\ $lk(D)= \Phi (S)$) by the definition of $\Phi (S)$ (see Figure \ref{fig-lk-P}).
Hence, we have $|lk(D)|= |\Phi (S) | = \Phi (K_2)$. 
\end{proof}
\medskip 

\noindent The following corollary follows immediately from the proof of Proposition \ref{prop-lk-Phi}. 

\medskip 
\begin{corollary}
Let $D=K_1 \cup K_2$ be a link diagram such that all the non-self crossings are on the single edge of the diagram $K_1$. 
Then $|lk(D)| = \Phi (K_2)$. 
\label{cor-k1-k2}
\end{corollary}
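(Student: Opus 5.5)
Corollary~\ref{cor-k1-k2} is proved by rerunning the argument of Proposition~\ref{prop-lk-Phi}, with the Jordan curve argument replaced by a local one along the single edge. Let $e$ be the edge of $K_1$ (an arc between consecutive self-crossings of $K_1$, or all of $K_1$ if it has none) that contains every non-self crossing. If $K_2$ has no non-self over-crossings or no non-self under-crossings, then every non-self crossing can be changed so that $K_2$ lies entirely above or below $K_1$. The link is then split, so $lk(D)=0$. Also $\Phi(K_2)=0$, since a sequence of a single letter reduces to the empty sequence. So assume both kinds of crossing occur. As in the proposition, I would take a non-self over-crossing $p$ of $K_2$, a base point $b$ on $K_2$ just before $p$, and the non-cyclic sequence $S$ read from $b$; this $S$ represents $\hat f(K_2)$.

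The key step replaces the region $R$ bounded by $K_1$. The arc $e$ is embedded in $S^2$, and a small regular neighborhood $N$ of $e$ is a disk that $e$ cuts locally into two sides, a left side and a right side with respect to the orientation of $e$. Every non-self crossing lies on $e$, so $K_2$ meets $K_1$ only inside $N$, and at each such crossing $K_2$ passes transversely from one side of $e$ to the other. I then need the claim that, traversing $K_2$ from $b$, consecutive non-self crossings alternate in direction: from left to right, then right to left, and so on.

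To prove the claim, I would look at the arc of $K_2$ between two consecutive non-self crossings. After leaving $N$ on, say, the right side, this arc never meets $e$ again before the next non-self crossing. Since $S^2\setminus e$ is connected, this does not follow from a separation argument alone, so I would use a parity count instead. Close $e$ up into a simple closed curve $c$ by adding a parallel arc $e'$ running just off $e$ inside $N$, chosen so that $K_2$ crosses $e'$ near each endpoint of $e$ only if it runs around the end of $e$. Then $K_2$ crosses $c$ an even number of times in total. Each passage of $K_2$ around an end of $e$ contributes a crossing with $e'$ that does not change the side of $e$. With this bookkeeping, the crossings with $e$ alternate between entering and exiting the disk bounded by $c$. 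This is the step where care is needed, but it reproduces the situation of Proposition~\ref{prop-lk-Phi}, with $c$ playing the role of $K_1$.

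Once alternation holds, the sign computation is the same as in the proposition. Crossings at positions of the same parity in $S$ have the same orientation of $K_2$ relative to $K_1$ (which $K_1$-direction it is). The sign of a crossing depends on this orientation and on which strand is over. Hence, if $p$ is positive, then $O$'s in odd positions and $U$'s in even positions are positive crossings, and the other two kinds are negative; if $p$ is negative, the signs swap. This gives $lk(D)=\mp\Phi(S)$, and therefore $|lk(D)|=|\Phi(S)|=\Phi(K_2)$.
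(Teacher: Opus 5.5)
\textbf{There is a genuine gap, at the alternation step.} Your outline is sound: the split case, the base point just before an over-crossing $p$, the reduction to showing that consecutive non-self crossings cross $e$ in alternating directions, and the final sign count all work. You are also right that alternation is the only step needing a new justification. But the justification you propose does not work, and no argument of that shape can. It uses only the arc $e$, a neighborhood $N$ and $K_2$, and from that data alone the claim is false.

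Take $e$ to be a horizontal segment. Let $K_2$ cross it upward near the left end, run above $e$ and around its right endpoint, cross $e$ upward again near the right end, and then return (with one self-crossing) above $e$ and around its left endpoint to the start. This closed curve meets $e$ exactly twice, in the same direction. Your bookkeeping explicitly allows such ``passages around an end of $e$'', and each one flips the correspondence between position parity and direction. So the asserted alternation does not follow.

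The curve $c=e\cup e'$ also does not do what you want. For $N$ small, $K_2\cap N$ consists of short arcs crossing $N$ from side to side. A parallel arc $e'\subset N$ is therefore crossed next to every non-self crossing, not only near the endpoints. The evenness of $|K_2\cap c|$ is then automatic and carries no information.

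\textbf{The missing ingredient is the rest of $K_1$.} A separation argument does work, but in $S^2\setminus K_1$ rather than $S^2\setminus e$.
\begin{enumerate}
\item The endpoints of $e$ are self-crossings of $K_1$, and $K_2$ meets $K_1$ only at points of $e$. So the arc of $K_2$ strictly between two consecutive non-self crossings lies in a single face of the diagram $K_1$.
\item Points just to the right of $e$ at different interior points lie in one face, and likewise on the left, since they are joined by a path parallel to $e$.
\item These two faces are distinct. This follows from the checkerboard coloring of $S^2\setminus K_1$. Equivalently: the parity of the number of intersections with $K_1$ of a transverse path between two points is independent of the path, because a closed curve in $S^2$ meets $K_1$ an even number of times. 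The short segment through $e$ meets $K_1$ once, so every path between the two sides meets $K_1$ at least once.
\item Hence the arc of $K_2$ returns to $e$ from the side it left, which is exactly the alternation you need.
\end{enumerate}
These two faces along $e$ are the natural replacement for the region $R$ and its complement in the proof of Proposition~\ref{prop-lk-Phi}, which is what the paper's one-line deduction relies on. With this step substituted, the rest of your argument goes through unchanged. When $K_1$ has no self-crossings, $e=K_1$ and the proposition applies directly.
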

\medskip 

\noindent For diagrams that have no self-crossings of both $K_1$ and $K_2$, we have the following corollary. 

\medskip 
\begin{corollary}
If a diagram $D=K_1 \cup K_2$ has no self-crossings, then $\Phi (K_1)= \Phi (K_2)= | lk(D) |$. 
\label{cor-no-self-crossing}
\end{corollary}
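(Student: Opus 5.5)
This follows directly from Proposition~\ref{prop-lk-Phi}, applied twice. Let $D=K_1 \cup K_2$ be an oriented diagram with no self-crossings.

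First, since $K_1$ has no self-crossings, Proposition~\ref{prop-lk-Phi} gives $|lk(D)|=\Phi(K_2)$. Its hypothesis only concerns self-crossings of $K_1$, so it applies verbatim.

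Second, we exchange the roles of the components. The linking number is symmetric, $lk(K_1,K_2)=lk(K_2,K_1)$, since it is half the sum of the signs of all non-self crossings and each sign is independent of the order of the components. The proof of Proposition~\ref{prop-lk-Phi} uses only that the first component is a simple closed curve on $S^2$, which is what produces the alternating in/out behaviour at consecutive non-self crossings. Relabelling $K_2$ as the first component, which has no self-crossings, therefore yields $|lk(D)|=\Phi(K_1)$. Combining the two equalities gives $\Phi(K_1)=\Phi(K_2)=|lk(D)|$.

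There is no real obstacle here. The only point needing care is the symmetry of the linking number and of the proposition under relabelling. This is immediate, because the proposition's proof never uses which component is called $K_1$ beyond the simplicity of that curve. The degenerate case with no non-self crossings, or with only one type of crossing, is already covered inside Proposition~\ref{prop-lk-Phi}.
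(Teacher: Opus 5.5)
Your proof is correct and is essentially the paper's argument: the paper states the corollary without a separate proof, as an immediate consequence of Proposition~\ref{prop-lk-Phi} applied once as stated and once with the roles of $K_1$ and $K_2$ exchanged. For the relabelling you need not look inside that proposition's proof, since its statement applies verbatim to the diagram $K_2 \cup K_1$ and $|lk(D)|$ does not depend on the order of the components.
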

\medskip 

\begin{example}
See the diagrams $D_1$, $D_2$, $D_3$ of the link $6^2_1$ in Figure \ref{fig-T26}. 
The diagram $D_1$ has no self-crossings and $\Phi (K^1_1)= \Phi (K^1_2)=3= | lk(6^2_1)|$. 
For $D_2$, we can see that the component $K^2_2$ has a self-crossing from the OU number $\Phi (K^2_1)=1 \neq |lk (6^2_1)|$ by Corollary \ref{cor-k1-k2}. 
For $D_3$, we can see that both components have a self-crossing from $\Phi (K^3_1)= \Phi (K^3_2)=1 \neq |lk(6^2_1)|$. 
\begin{figure}[ht]
\centering
\includegraphics[width=9cm]{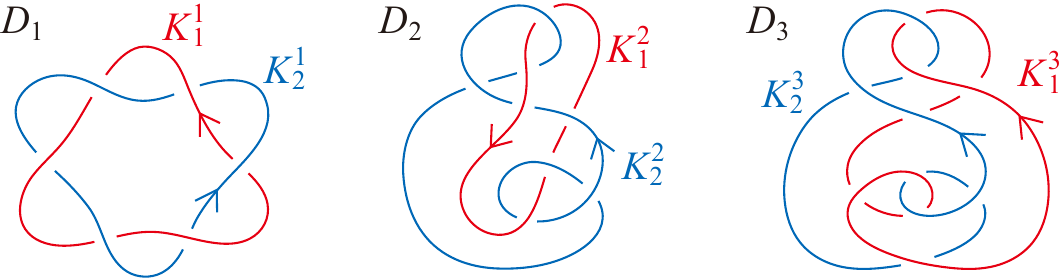}
\caption{Oriented diagrams $D_1$, $D_2$, $D_3$ of the link $6^2_1$ with $\hat{f}(D_1)=(OUOUOU, OUOUOU)$, $\hat{f}(D_2)=(OOOUUU, OUOUOU)$, $\hat{f}(D_3)=(OOOUUU, OOOUUU)$. }
\label{fig-T26}
\end{figure}
\end{example}
\medskip

\noindent Now we prove Proposition \ref{prop-self-crossing}. \\

\noindent {\it Proof of Proposition \ref{prop-self-crossing}.} \ 
It follows from the contrapositive of Corollary \ref{cor-no-self-crossing}. 
\hfill$\square$  \\

\noindent For a given well-balanced pair $(w_1, w_2)$, we have the following corollary from Lemma \ref{lem-1} and Proposition \ref{prop-lk-Phi}. 

\medskip 
\begin{corollary}
For a well-balanced pair $(w_1, w_2)$ of cyclic sequences of $O$ and $U$, there are link diagrams $D=K_1 \cup K_2$ such that $\hat{f}(D)=(w_1, w_2)$ and $lk(D)=\Phi (w_1)$ or $\Phi (w_2)$. 
\end{corollary}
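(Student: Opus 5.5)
Both realizations come from the two constructions in the proof of Lemma~\ref{lem-1}, with the linking number read off via Proposition~\ref{prop-lk-Phi}. Since the corollary asks only that suitable diagrams exist, one diagram for each case suffices. The trivial case $(w_1,w_2)=(\emptyset,\emptyset)$ is handled by a split diagram without non-self crossings: it has $lk(D)=0=\Phi(\emptyset)$.

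\emph{Construction 1.} Assume $(w_1,w_2)\neq(\emptyset,\emptyset)$. Apply Construction~1 to obtain $D=K_1\cup K_2$ with $\hat{f}(D)=(w_1,w_2)$. In Step~1 the component $K_1$ is drawn as a simple closed curve $C$. In Step~2, $K_2$ is drawn so that it produces no extra non-self crossings; the only new crossings it creates are self-crossings of $K_2$. Hence $K_1$ has no self-crossings, and Proposition~\ref{prop-lk-Phi} gives $|lk(D)|=\Phi(K_2)=\Phi(w_2)$.

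\emph{Construction 2.} Symmetrically, Construction~2 draws $K_2$ first as a simple closed curve realizing $w_2$ and then routes $K_1$ through those crossings according to $w_1$. Only self-crossings of $K_1$ can appear, so Proposition~\ref{prop-lk-Phi} with the roles of the components exchanged yields $|lk(D')|=\Phi(w_1)$. This applies because the linking number is symmetric.

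\emph{Fixing the sign.} The statement asks for $lk(D)=\Phi(w_i)$, not just $|lk(D)|=\Phi(w_i)$, so the sign must be corrected. If $lk(D)=-\Phi(w_2)$, take the mirror image of $D$ and then change every crossing, i.e.\ reflect $S^2$. Equivalently, reflect the diagram in a line of the plane, keeping orientations and the over/under information at each crossing as a projection: the letters $O$ and $U$ along each component are unchanged, but every crossing sign flips. Hence $\hat{f}$ is preserved while $lk$ changes sign.

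The main point to verify is that this planar reflection really keeps each cyclic sequence and its reading direction intact. Traversal order along each oriented component is unaffected by the reflection, and a strand that passes over still passes over, so $\hat{f}$ is unchanged. The crossing signs all flip because the reflection reverses the orientation of the plane while fixing the over/under structure. This gives diagrams realizing $lk=\Phi(w_1)$ and $lk=\Phi(w_2)$ respectively.
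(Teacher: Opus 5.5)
Your proposal is correct, and its core is the paper's argument: realize $(w_1,w_2)$ by Construction~1 (resp.\ Construction~2), in which $K_1$ (resp.\ $K_2$) is a simple closed curve, and read off $|lk(D)|=\Phi(w_2)$ (resp.\ $\Phi(w_1)$) from Proposition~\ref{prop-lk-Phi}.

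You differ from the paper only in how you fix the sign, and your way is the more careful one. The paper reverses the orientation of one component. That flips $lk(D)$, but it also reverses the order in which that component meets the non-self crossings. It therefore replaces, say, $w_2$ by its reverse cyclic sequence, which is in general a different cyclic sequence: for example, $OOUOUU$ reverses to $UUOUOO$, which is not a rotation of it. So that step preserves $\hat{f}(D)$ only when the relevant sequence is reversal-symmetric. Your reflection of the diagram in a line keeps every traversal order and every over/under datum, hence keeps $\hat{f}(D)$, while flipping every crossing sign, so it finishes the argument cleanly. Another valid option is to choose, in Construction~1, on which side of $C$ the curve $K_2$ starts.

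One wording fix: delete the clause ``take the mirror image of $D$ and then change every crossing.'' If the mirror image means switching all crossings, that composite just returns $D$. If it means the planar reflection, the extra crossing change swaps $O$ and $U$ along both components and restores the original sign of $lk$. The operation you actually describe and verify afterwards, reflection in a line with over/under information kept, is the right one.
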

\medskip 

\begin{proof}
Let $D_1$ (resp. $D_2$) be a link diagram obtained by Construction 1 (resp. 2) on $(w_1, w_2)$. 
By Proposition \ref{prop-lk-Phi}, we have $|lk(D_1)|= \Phi (K_2)= \Phi (w_2)$, $|lk(D_2)|= \Phi (K_1)= \Phi (w_1)$. 
By reversing the orientation of one component if necessary, we obtain a diagram satisfying the condition. 
\end{proof}

\subsection{Link diagrams with alternating sequences}

In this subsection, we study link diagrams and their linking numbers with alternating non-self OU sequences. 
We have the following lemma. 

\medskip 
\begin{lemma}
Let $w$ be an alternating cyclic sequence of $O$ and $U$ with even length $2m$. 
For any number $n$ with $-m \leq n \leq m$ and $n \equiv m \pmod{2}$, there exists a diagram $D=K_1 \cup K_2$ with $\hat{f}(D)=(w,w)$ and $lk(D)=n$. 
\label{lem-mod2}
\end{lemma}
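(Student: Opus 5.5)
Write $m = p + q$ with $p - q = n$, so that $p = (m+n)/2$ and $q = (m-n)/2$ are non-negative integers; this is where the hypotheses $n \equiv m \pmod 2$ and $|n| \le m$ are used. The plan is to build $D$ as a "twist region" of $2m$ non-self crossings in which $2p$ crossings are positive and $2q$ are negative, so that $lk(D) = (2p - 2q)/2 = n$. We then check that the non-self OU sequence of each component is still alternating. Self-crossings are allowed, and they are what will let us change crossing signs without changing the OU pattern.

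\textbf{The model.} The minimal diagram of $T(2,2m)$ is two parallel strands twisted $2m$ times. Traversing either component, the non-self crossings are met alternately over and under, and $\hat f = (w,w)$ with $w$ alternating of length $2m$ (as used in the proof of Theorem \ref{thm-ch4}). Cut this into $m$ "clasps" of two consecutive crossings each. In each clasp $K_1$ goes over then under, and the same holds for $K_2$.

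\textbf{Flipping a clasp.} To change the sign of a clasp, replace it by a clasp of the opposite handedness. In that new clasp $K_1$ still meets the two crossings in the order over, under, and so does $K_2$. This is possible because a clasp can be realized with either sign by inserting kinks, i.e.\ self-crossings from Reidemeister I curls, in one strand, which reverse the local direction of that strand relative to the other. Concretely, in the twist region each crossing's sign is determined by the crossing information together with the relative directions of the two strands there. If strand $K_2$ makes a full turn (one self-crossing) between two consecutive non-self crossings, its direction relative to $K_1$ flips, and hence so does the sign of the subsequent crossings, while the O/U letters are unchanged.

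\textbf{Assembly.} Place the $m$ clasps in series along the two strands: $p$ clasps of positive type and $q$ of negative type. Insert a curl of $K_2$ wherever consecutive clasps have different types, and at the end if needed so that $K_2$ closes up consistently. The non-self crossings met along $K_1$ then read $OU\,OU\cdots OU = w$, and the same holds along $K_2$, because the curls contribute only self-crossings. Hence $\hat f(D) = (w,w)$, and $lk(D) = p - q = n$ (or $-n$; reversing both orientations keeps $\hat f$ and $lk$, and taking the mirror image flips the sign if needed).

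\textbf{Consistency check and fallback.} Corollary \ref{cor-k1-k2} serves as a check: if all non-self crossings lay on one edge of $K_1$ with $K_1$ simple, we would be forced to have $|lk| = \Phi(w) = m$. So the curls must be on $K_2$ and must make $K_2$ wind around $K_1$ in alternating directions, consistent with $\Phi(K_2) \ne |lk|$ unless $q = 0$. The hard step is exactly this one: showing that a curl on $K_2$ reverses the sign of the following clasp without altering the O/U order on either component, and that the whole thing closes up. If the local picture fails, the fallback is to realize each negative clasp by passing $K_2$ around the back of $K_1$'s strand. That is, replace a positive clasp by its mirror and reroute $K_2$ through a self-crossing so that both components still see $O$ then $U$. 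The whole argument would then be verified by computing signs via the position-parity argument in the proof of Proposition \ref{prop-lk-Phi}, treating each clasp separately.
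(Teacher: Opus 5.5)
\textbf{There is a genuine gap: the local sign-flipping move.} Your global bookkeeping agrees with the paper's proof. The paper starts from the minimal diagram of $T(2,2m)$ with $lk=-m$ and modifies $(m+n)/2$ bigons where $K_1$ is upper, each modification preserving $\hat f$ and raising $lk$ by $2$. But the local move is the entire content of the lemma, and yours cannot work. First, a Reidemeister~I curl turns $K_2$ through a full $2\pi$. So $K_2$ leaves the curl running in the same direction it entered, and no later crossing changes sign. More decisively, in your assembly, and in your fallback, $K_1$ stays the simple closed curve it is in the $T(2,2m)$ diagram. Proposition~\ref{prop-lk-Phi} then gives $|lk(D)|=\Phi(K_2)=\Phi(w)=m$ however $K_2$ is curled or rerouted, so your diagrams only realize $n=\pm m$. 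Your consistency check reads Corollary~\ref{cor-k1-k2} the wrong way round. It says $K_1$ must have self-crossings separating its non-self crossings, and the same argument with the components exchanged says $K_2$ must have self-crossings too. Separately, mirroring a clasp turns the local reading of $K_1$ into $UO$, and no rerouting of $K_2$ can repair that.

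The missing idea is a local replacement that adds a self-crossing to each component. Take a clasp in which the arc $a$ of $K_1$ reads $OU$ and the arc $b$ of $K_2$ enters and leaves on the same side of $a$.
\begin{enumerate}
\item Insert a kink in $a$ whose loop lies on $b$'s side. On the far part of this loop, $a$ runs opposite to its original direction.
\item Reroute $b$ to clasp this part of the loop instead, dipping into the monogon and back out. Choose the crossing information so that $a$ still reads $OU$ and $b$ still reads its original two letters.
\item At each of the two crossings, the over/under roles and the direction of $b$ are as before, while the direction of $a$ is reversed. So both crossings change sign and $lk$ changes by $2$.
\item Keeping the readings forces $b$ to meet the two crossings in the opposite left-to-right order. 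So $b$ must cross itself once to return to its original endpoints.
\end{enumerate}
No non-self crossing is created, so $\hat f$ is unchanged. Applying this at $(m+n)/2$ clasps of the diagram with $lk=-m$ gives $lk=n$ and proves the lemma.
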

\medskip 

\begin{proof}
Let $D= K_1 \cup K_2$ be the minimal diagram of the $(2,2m)$-torus link with the orientation indicated in Figure \ref{fig-m2}. 
Then $D$ satisfies $\hat{f}(D)=(w, w)$ and $lk(D)= \Phi (w) = -m$. 
\begin{figure}[ht]
\centering
\includegraphics[width=9cm]{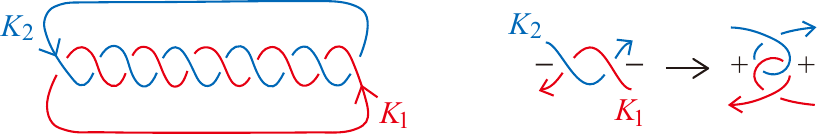}
\caption{A diagram with linking number $-m=-5$ on the left-hand side and a transformation on the right-hand side.}
\label{fig-m2}
\end{figure}
Apply the transformation depicted on the right-hand side of Figure \ref{fig-m2} for $\frac{m+n}{2}$ times at some bigons where $K_1$ is upper. 
Note that $\frac{m+n}{2}$ is an integer since $n \equiv m \pmod{2}$ and the value of $\frac{m+n}{2}$ does not exceed the number of the bigons where $K_1$ is upper since $n \leq m$. 
By this transformation, the non-self OU sequence is preserved and the linking number increases by 2. 
Thus, we obtain a diagram $D'$ such that $\hat{f}(D')=(w,w)$ and $lk(D')=n$. 
\end{proof}
\medskip 

\noindent We obtain the following corollary, regarding a core part of a diagram in the proofs of Lemmas \ref{lem-trivial} and \ref{lem-hopf}. 

\medskip 
\begin{corollary}
Let $w$ be an alternating sequence of $O$ and $U$ of even length. 
\begin{itemize}
\item[(1)] If $\# O(w) \equiv 0 \pmod{2}$, then there exists a diagram $D$ of a trivial link with $\hat{f}(D)=(w,w)$. 
\item[(2)]  If $\# O(w) \equiv 1 \pmod{2}$, then there exists a diagram $D$ of a Hopf link with $\hat{f}(D)=(w,w)$. 
\end{itemize}
\label{cor-diagrams}
\end{corollary}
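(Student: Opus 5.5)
The plan is to apply Lemma \ref{lem-mod2} directly and then check that the resulting diagram is the right link. Let $w$ be alternating of length $2m$. Since $w$ alternates, $\#O(w)=m$, so the parity hypothesis is a statement about $m$.

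For (1), suppose $m$ is even. We apply Lemma \ref{lem-mod2} with $n=0$, which is allowed because $0\equiv m \pmod 2$ and $-m\le 0\le m$. This gives a diagram $D=K_1\cup K_2$ with $\hat f(D)=(w,w)$ and $lk(D)=0$. What remains is to show that this $D$ is a diagram of the trivial link, not merely a link with linking number zero. That requires looking at the construction in the proof of Lemma \ref{lem-mod2}. We start from the standard diagram of $T(2,2m)$, in which $K_1$ is over at every other bigon. We then apply the move of Figure \ref{fig-m2} at $m/2$ of the bigons where $K_1$ is over.

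I expect this identification to be the main obstacle. The move changes the diagram so that the two strands at a bigon contribute opposite signs, and it preserves the OU pattern. Locally it acts like undoing a full twist of the two strands into a clasp-free configuration, up to self-crossings that do not affect the link type between components.

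Concretely, I would choose the bigons so that, after the moves, the full twists pair up as $(+1)$ and $(-1)$ twist regions that are adjacent along the braid closure. These pairs then cancel by Reidemeister II moves, possibly after Reidemeister I moves to remove the kinks introduced by the transformation. Cancelling all pairs leaves two disjoint unknotted circles, so $D$ represents the trivial link. If the figure's move instead yields the diagram of the two-string braid $\sigma_1^{2a}\sigma_1^{-2b}$ closed up with extra kinks, the same conclusion follows from $\sigma_1^{2a}\sigma_1^{-2b}=\sigma_1^{2(a-b)}$ with $a=b$. In that case the closure is the closure of the trivial 2-braid, which is the trivial 2-component link.

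For (2), suppose $m$ is odd. We take $n=\pm1$ in Lemma \ref{lem-mod2} and perform $(m+1)/2$ moves, arranged in the same way. Then all but one full twist cancel in pairs, and the remaining diagram is the closure of $\sigma_1^{\pm2}$, which is the Hopf link $2^2_1$. Because the paper considers links up to mirror image, the sign does not matter. So the whole argument reduces to one verification, carried out against Figure \ref{fig-m2}: that the transformed torus diagram is the closure of a 2-braid in which the exponents add up to $2n$, with any extra kinks being removable.
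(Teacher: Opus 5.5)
Your overall strategy is the paper's. You realize $(w,w)$ by the construction of Lemma~\ref{lem-mod2}, apply the move of Figure~\ref{fig-m2} to every other one of the $m$ full twists (the paper's ``every other bigon where $K_1$ is upper''), and argue that each adjacent pair of an original and a transformed twist cancels. In the paper this cancellation is read off Figure~\ref{fig-T}: the four-crossing tangle $T$ made of one original and one transformed twist is a trivial tangle, so $A(N)$ is the unlink and $B(N)$, which has one leftover clasp, is the Hopf link. Using $(m+1)/2$ moves in (2) instead of the paper's $(m-1)/2$ is harmless.

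However, the one verification you single out would fail as you state it. The transformed diagram cannot be a closed $2$-braid $\sigma_1^{2a}\sigma_1^{-2b}$ with kinks removable by Reidemeister~I moves:
\begin{itemize}
\item For $a,b\ge 1$, such a diagram has $\hat f(K_1)=(OU)^a(UO)^b$, which is not alternating.
\item More generally, deleting isolated curls would leave a diagram with no self-crossings and $\hat f=(w,w)$. By Corollary~\ref{cor-no-self-crossing} it would then have $|lk|=\Phi(w)=m$, contradicting $|lk|=|n|<m$.
\end{itemize}
So the self-crossings created by the move are essential; by Proposition~\ref{prop-lk-Phi}, both components must have them.

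The transformed region is not ``clasp-free'' either. It still carries a clasp, of the opposite sign, which is why $lk$ rises by $2$ per move; a clasp-free region would change $lk$ by only $1$. What actually has to be checked is that this region is isotopic rel endpoints to a full twist of the opposite sign, which is what the triviality of $T$ in Figure~\ref{fig-T} amounts to. That is a genuine isotopy of the tangle involving its self-crossings, not the relation $\sigma_1^{2}\sigma_1^{-2}=1$ applied to a braid diagram with stray kinks.

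Once this is established, your conclusion follows. The placement of the moves then becomes irrelevant, since the link is the closure of a $2$-braid with exponent sum $2n$: the unlink for $n=0$ and the Hopf link for $n=\pm1$.
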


\begin{proof}
Diagrams are obtained in the same way as the proof of Lemma \ref{lem-mod2} by applying the transformation shown in Figure \ref{fig-m2} to every other bigons where $K_1$ is upper. 
Such diagrams, shown in Figure \ref{fig-T}, are denoted by $A(N)$, $B(N)$, where $N= \lfloor \frac{\# O(w)}{2} \rfloor$. 
Note that the tangle $T$ depicted on the left-hand side of Figure \ref{fig-T} represents a trivial tangle and therefore $A(N)$ (resp. $B(N)$) represents a trivial link (resp. a Hopf link). 
\begin{figure}[ht]
\centering
\includegraphics[width=12cm]{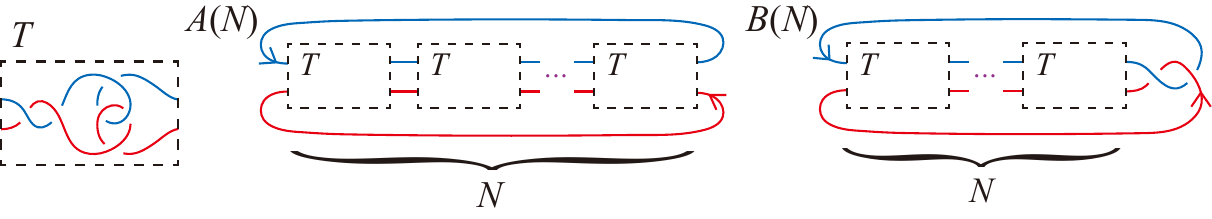}
\caption{Diagrams $A(N)$ representing a trivial two-component link and $B(N)$ representing a Hopf link. }
\label{fig-T}
\end{figure}
\end{proof}
\medskip 

\noindent We remark that $A(N)$ (resp. $B(N)$) has OU number $2N$ (resp. $2N+1$).

\section{Proof of Theorem \ref{thm-ch2}}
\label{section-proof-ch2}

In this section, we prove Theorem \ref{thm-ch2}.

\begin{lemma}
If $(w_1, w_2)$ is well-balanced and $\# O(w_1) \equiv 0 \pmod{2}$, then there exists a diagram $D=K_1 \cup K_2$ of the trivial link with $\hat{f}(D)=(w_1, w_2)$. 
\label{lem-trivial}
\end{lemma}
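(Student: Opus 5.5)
Given a well-balanced pair $(w_1,w_2)$ with $\#O(w_1)$ even, the plan is to build a core from the diagram $A(N)$ of Corollary \ref{cor-diagrams}, arranged so that its non-self OU sequences are alternating sequences $w_1'$ and $w_2'$ obtained from $w_1$ and $w_2$ by reductions. We then reinsert the reduced pairs ``$OO$'' and ``$UU$'' by local moves that preserve the link type. The case $(w_1,w_2)=(\emptyset,\emptyset)$ is trivial. If $w_1$ consists of a single symbol, say $w_1=O^{2k}$, then $w_2=U^{2k}$. In that case we let $K_1$ lie entirely over $K_2$, using $k$ finger moves of $K_2$ passing under a segment of $K_1$. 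This gives a split, hence trivial, diagram with no self-crossings, and the case is done. So assume both $w_1$ and $w_2$ contain both symbols.

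The main difficulty is that $\Phi(w_1)$ and $\Phi(w_2)$ may differ, while a core $A(N)$ has equal alternating sequences $(w',w')$. I would handle this by not insisting on a common core. Each ``$OO$'' reduction on $K_1$ can be realized as a clasp-free finger: a small arc of $K_1$ pushed over $K_2$, and this produces ``$OO$'' on $K_1$ and ``$UU$'' on $K_2$ simultaneously. Hence reductions on the two sequences must be paired up. The idea is to realize everything through Construction 1, keeping full control of the geometry. Let $K_1$ be a round circle carrying $w_1$, and choose a base $K_2$ so that the resulting link is trivial.

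Concretely, I would proceed as follows. Apply Lemma \ref{lem-seq-1} to $w_2$ to get $\Phi(w_2)$ surviving ``$OU$'' pairs. By Proposition \ref{prop-Phi-O}, $\Phi(w_2)\equiv \#O(w_2)=\#U(w_1)\equiv 0$. Draw $K_2$ crossing the circle $K_1$ in the order dictated by $w_2$, with every reduced pair ``$OO$'' or ``$UU$'' of $w_2$ drawn as a short excursion into and back out of the disk bounded by $K_1$. Such an excursion is a trivial finger and can be removed by a Reidemeister II move, provided its two endpoints on $K_1$ are adjacent among the crossings carrying equal letters. The survivors then give an alternating sequence of length $2\Phi(w_2)=4N$. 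We arrange those $4N$ crossings on $K_1$ as in $A(N)$, pairing consecutive survivors into the trivial tangles $T$; the pairs of survivors of the form $OU$ give the two-crossing clasps. The positions along $K_1$ are dictated by $w_1$, so $K_2$ may need self-crossings to reach them. Those self-crossings are free, since non-self data ignores them, and we choose their crossing information so that $K_2$ lies entirely above or below itself in a descending manner. This does not change the link type of $K_1\cup K_2$ beyond making $K_2$ unknotted.

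I expect the main obstacle to be verifying triviality. The argument would be an isotopy: first contract the excursions of $K_2$, whose crossings with $K_1$ have equal type, through Reidemeister II moves, possibly after crossing changes among self-crossings of $K_2$ (allowed because $K_2$ is drawn descending). What remains is $A(N)$ together with an unknotted $K_2$, which by Corollary \ref{cor-diagrams} is the trivial link. The care point is ensuring the excursion arcs can be isotoped off $K_1$ without meeting other arcs of $K_2$ in an essential way. Using the reduction order of Lemma \ref{lem-seq-1}, which always removes outermost equal letters, I would make the excursions nested so that they can be removed innermost first.
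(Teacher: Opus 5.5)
Your construction has a genuine gap, and it is structural. You keep $K_1$ a round circle and put all the freedom into self-crossings of $K_2$. But Proposition~\ref{prop-lk-Phi} says that whenever $K_1$ has no self-crossings, $|lk(D)|=\Phi(K_2)=\Phi(w_2)$, however the self-crossings of $K_2$ are resolved. So as soon as $\Phi(w_2)\ge 2$, your diagram has nonzero linking number and cannot be the trivial link.

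The pair $(OUUO,OUOU)$ of Example~\ref{ex-Phi-zero} is already a counterexample. There $w_1=OUUO=OOUU$ cyclically, so $w_1$ also has no alternating subsequence of length $2\Phi(w_2)=4$ on which to place the core crossings. Swapping the roles of the components does not help. For $(OUOU,OUOU)$ both OU numbers equal $2$, so every trivial-link diagram realizing it has self-crossings on both components. By the same reasoning, $A(N)$ with $N\ge 1$ has self-crossings on both components and cannot be ``arranged'' around a round $K_1$.

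Several further steps are also unsupported:
\begin{enumerate}
\item Resolving the self-crossings of $K_2$ descendingly only unknots $K_2$; it does not control the link type. In the standard five-crossing Whitehead diagram, $K_1$ is round and $K_2$ has one self-crossing. Both resolutions of it are descending from a suitable base point, yet one gives the Whitehead link and the other the trivial link.
\item Cancelling an excursion by a Reidemeister II move needs its two crossings to be adjacent on $K_1$. Those positions are fixed by $w_1$, not by the reductions of $w_2$, so nothing secures this.
\item Your claim that reductions on the two sequences must be paired up is false. A finger of $K_1$ passing over $K_2$ gives consecutive $OO$ on $K_1$, but its two $U$'s can sit anywhere on $K_2$. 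The pairing is also impossible in general, since $w_1$ and $w_2$ need $|w_1|/2-\Phi(w_1)$ and $|w_2|/2-\Phi(w_2)$ reductions respectively.
\end{enumerate}

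The missing idea is to exploit exactly this asymmetry. Assume $\Phi(w_1)\le\Phi(w_2)$ and take the core $A(\Phi(w_1)/2)$. Its crossings carry two things: $\Phi(w_1)$ pairs $OU$ of $w_1$ that survive a reduction sequence $r$ (Lemma~\ref{lem-seq-1}(2)), and \emph{any} $\Phi(w_1)$ pairs $OU$ of $w_2$, which exist by Lemma~\ref{lem-seq-1}(1) since $\Phi(w_2)\ge\Phi(w_1)$. Then undo only the reductions of $w_1$, in reverse order. Each one is a finger of $K_1$ on an arc carrying no non-self crossings. The new arc lies entirely over everything it crosses (for $OO$) or entirely under (for $UU$). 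It meets $K_2$ exactly at two arbitrarily assigned letters $U$ (resp.\ $O$) of $w_2$, and well-balancedness makes this assignment possible. Each such finger can be undone by Reidemeister moves, so the link stays trivial. The reductions of $w_2$ are never used, and the new self-crossings land on $K_1$, which is exactly where the linking-number obstruction above forces them to be.
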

\medskip 

\noindent Before the proof, let us see an example of constructing a diagram $D$ of a trivial link such that $\hat{f}(D)=(w_1, w_2)$ for a given well-balanced pair $(w_1, w_2)$ with $\# O(w_1) \equiv 0 \pmod{2}$, for the case of the OU number $\Phi (w_1)=0$. 

\medskip 
\begin{example}
Let $(w_1, w_2)=(OUUO, OUOU)$, which has OU numbers $\Phi (w_1)=0$ and $\Phi (w_2) =2$. 
We can deform $w_1$ into the empty sequence $\emptyset$ by a sequence, say $r$, of two reductions, as $O\boldsymbol{UU}O \to \boldsymbol{OO} \to \emptyset$. 
Take a non-cyclic sequences $S_1$ and $S_2$ from $w_1$ and $w_2$ by choosing any initial letter, and assign the letters subscripts representing its position in the sequence. 
For example, let $S_1=O_1 U_2 U_3 O_4$, $S_2 = O_1 U_2 O_3 U_4$. 

For $S_1$, assign the letters superscripts representing the order of reduction in the sequence $r$, namely, assign as $S_1 = O^2_1 U^1_2 U^1_3 O^2_4$. 
For $S_2$, then, assign the letters superscripts in the following manner. 
If $S_1$ has $O^i_a$ and $O^i_b$ (resp. $U^j_c$ and $U^j_d$) for some $a, b$ (resp. $c, d$), assign the superscript $i$ (resp. $j$) to any two $U$s  (resp. $O$s) in $S_2$. 
Namely, let $S_2=O^1_1 U^2_2 O^1_3 U^2_4$. 
The superscripts in $S_1$ and $S_2$ indicate the pairing of non-self crossings of a diagram we construct. 

Draw two oriented simple closed curves $K_1$, $K_2$ on $S^2$ as shown in the upper-left of Figure~\ref{fig-Phi-zero}. 
Take four points on $K_1$, $K_2$ and assign them the letters according to $S_1$, $S_2$, respectively. 
We transform the diagram $K_1 \cup K_2$ in the following procedure (a)--(d). 
\begin{figure}[ht]
\centering
\includegraphics[width=12cm]{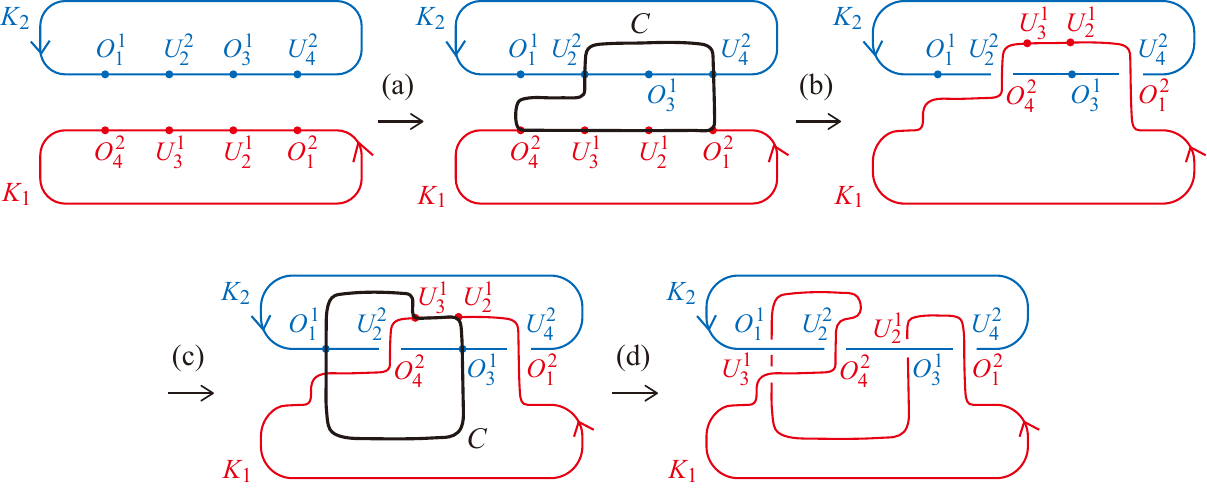}
\caption{Procedure (a)--(d) of constructing a diagram $D$ of a trivial link with $\hat{f}(D)=(OUUO, OUOU)$}
\label{fig-Phi-zero}
\end{figure}
\begin{enumerate}[label=(\alph*)]
\item Take a portion $I$ of $K_1$ that has endpoints $O^2_1$ and $O^2_4$. 
Draw a simple closed curve $C$ on $S^2$ so that $C$ overlaps only $I$ and crosses transversely only the points $U^2_2$, $U^2_4$ on $K_2$. 
\item On $K_1$, replace $I$ with the closure $\overline{C \setminus I}$. 
For the new crossings on $C \setminus I$, give crossing information so that $C \setminus I$ is over. 
Assign the letters $O^2_1$ and $O^2_4$ to the new non-self crossings and retake the points $U^1_2$, $U^1_3$ on $K_1$ between $O^2_1$ and $O^2_4$ according to $S_1$. 
\item Take the portion $I$ of $K_1$ that has no non-self crossings and has endpoints $U^1_2$, $U^1_3$. 
Draw a simple closed curve $C$ so that $C$ overlaps only $I$ and crosses only the points $O^1_1$, $O^1_3$ transversely on $K_2$. 
The curve may crosses $K_1$ if necessary. 
\item On $K_1$, replace $I$ with $\overline{C \setminus I}$. 
For the new crossings on $C \setminus I$, assign crossing information so that $C \setminus I$ is under. 
Assign the letters $U^1_2$ and $U^1_3$ to the new non-self crossings according to $S_1$. 
\end{enumerate}
We observe that the obtained diagram $D$ represents a trivial link since the above procedure does not change the link type of the diagram. 
Thus, we obtain a diagram $D$ of a trivial link with $\hat{f}(D)=(w_1, w_2)$. 
\label{ex-Phi-zero}
\end{example}
\medskip 

\noindent Example~\ref{ex-Phi-zero} demonstrates a case of $\Phi (w_1)=0$. 
For the case that the OU number is non-zero, we cannot obtain an empty sequence $\emptyset$ by reductions. 
Instead, we obtain a non-empty alternating sequence. 
Utilizing the core diagram $A(N)$ given in Corollary~\ref{cor-diagrams}(1), then, we prove Lemma~\ref{lem-trivial}. 

\medskip 
\begin{proof}[Proof of Lemma \ref{lem-trivial}]
Let $(w_1, w_2)$ be a pair of OU sequences that is well-balanced and satisfies $\# O(w_1) \equiv 0 \pmod{2}$. 
Without loss of generality, we assume that $\Phi (w_1) \leq \Phi (w_2)$; we may change the order of $w_1, w_2$ and $K_1, K_2$ if $\Phi (w_1) > \Phi (w_2)$. 
Note that $\Phi (w_1) \equiv \Phi (w_2) \equiv \# O(w_1) \equiv 0 \pmod{2}$ by Proposition~\ref{prop-Phi-O}. 
Suppose that $w_1$ can be transformed into an alternating sequence by $k$ reductions, namely, $k= \frac{1}{2} (|w_1|-2 \Phi (w_1))$. 

We obtain a diagram $D=K_1 \cup K_2$ of the trivial link with $\hat{f}(D)=(w_1, w_2)$ in the following 5 steps (see Example \ref{ex-ex}). 

\vspace{1ex}
\noindent\textbf{Step 1.}
For $w_1$, fix $\Phi (w_1)$ pairs of ``$OU$'' that remain in the alternating sequence $w'_1$ after applying a sequence, say $r$, of $k$ reductions. 
This is feasible by Lemma~\ref{lem-seq-1}(2). 
For $w_2$, fix any $\Phi (w_1)$ pairs of ``$OU$''. 
This is feasible by $\Phi (w_1) \leq \Phi (w_2)$ and Lemma \ref{lem-seq-1} (1). 

\vspace{1ex}
\noindent\textbf{Step 2.}
Take non-cyclic sequences $S_1$ and $S_2$ from $w_1$ and $w_2$ so that a fixed ``$OU$'' pair is at the head if it exists. 
For $S_1$ and $S_2$, assign a subscript to each letter representing its position. 

Give unfixed letters in $S_1$ superscripts representing the order of reductions in the sequence $r$. 
Then give unfixed letters in $S_2$ superscripts in the following manner. 
If there is a pair $O^i_a$ and $O^i_b$ in $S_1$ for some $a, b$, then give the superscript $i$ to two $U$s in $S_2$. 
If there is a pair $U^j_c$ and $U^j_d$ in $S_1$ for some $c, d$, then give the superscript $j$ to two $O$s in $S_2$. 
This assignment is feasible because $(w_1, w_2)$ is well-balanced. 

\vspace{1ex}
\noindent\textbf{Step 3.}
Take non-cyclic alternating sequences $S'_1$ and $S'_2$ from $S_1$ and $S_2$ by deleting the unfixed letters. 
Note that $\Phi (S'_i)=\Phi (S_i)= \Phi (w_i) \equiv 0 \pmod{2}$ for $i=1,2$. 
Let $A ( \frac{\Phi (w_1)}{2} )$ be the oriented diagram of a trivial link that is illustrated in Figure \ref{fig-T} with $N=\frac{\Phi (w_1)}{2}$. 
Assign each non-self crossing of $A ( \frac{\Phi (w_1)}{2} )$ a pair of letters $O$ and $U$ with subscripts according to $S'_1$ and $S'_2$. 

\vspace{1ex}
\noindent\textbf{Step 4.}
Take points on arcs of the diagram $A ( \frac{\Phi (w_1)}{2} )$ and assign them the rest letters $O$ and $U$ with subscripts and superscripts according to $S_1$ and $S_2$. 

\vspace{1ex}
\noindent\textbf{Step 5.}
Apply the following procedure for $l=k, k-1, \dots , 2, 1$:

\noindent
(a) \textit{Construction}:
Take a portion $I$ of $K_1$ between $O^l_a$ and $O^l_b$ or $U^l_c$ and $U^l_d$ which does not contain non-self crossings. 
It is possible to take such $I$ because there are no fixed ``$OU$'' pairs between $O^l_a$ and $O^l_b$ or $U^l_c$ and $U^l_d$ in $S_1$. 
Draw a simple closed curve $C$ on $S^2$ that overlaps only $I$, crosses transversely only the two points of $U^l_\alpha$ and $U^l_\beta$ or $O^l_\gamma$ and $O^l_\delta$ on $K_2$. 
The curve $C$ may cross $K_1$ transversely if necessary. 
It is possible to draw such $C$ since $K_2$ can be regarded as a circle by regarding the hooks derived from $A ( \frac{\Phi (w_1)}{2} )$ that tangle with $K_1$ very small. 
Note that $U^l_\alpha$ and $U^l_\beta$ or $O^l_\gamma$ and $O^l_\delta$ are not in the hooks. 

\noindent
(b) \textit{Assignment}:
On $K_1$, replace the portion $I$ with $\overline{C \setminus I}$. 
For the new crossings on $C \setminus I$, give the crossing information so that $C \setminus I$ is over (resp. under) if $S_1$ has $O^l_a$ (resp. $U^l_c$). 
Assign the letters $O^l_a$ and $O^l_b$ or $U^l_c$ and $U^l_d$ to the new non-self crossings, and retake points and assign the letters that were on $I$ to $C \setminus I$ according to $S_1$. 

\vspace{2ex}
Thus, we obtain a diagram $D=K_1 \cup K_2$ with $\hat{f}(D)=(w_1 , w_2)$. 
Moreover, $D$ represents the trivial link since each procedure of Step 5 does not change the link type of the diagram. 
Indeed, we can reduce the crossings of $D$ that are produced in the procedures by Reidemeister moves for $l=1, 2, \dots , k$, and we obtain the diagram $A ( \frac{\Phi (w_1)}{2} )$ again. 
\end{proof}
\medskip

\begin{example}
Let $w_1=OOUOUOUU$, $w_2= OUOUOUOU$. 
Then $(w_1, w_2)$ is well-balanced, $\# O(w_1)=4 \equiv 0 \pmod{2}$, $\Phi (w_1)=2$, $\Phi (w_2)=4$ and $k=2$. 
\begin{itemize}
\item[Step 1.] Fix two ``$OU$'' pairs in $w_1$ as follows: 
$w_1=OOU(OU)(OU)U$, where we consider a sequence $r$ of reductions on $w_1$ as $\boldsymbol{OO}UOUOUU \to \boldsymbol{U}OUOU \boldsymbol{U} \to OUOU$. 
Then fix two ``$OU$'' pairs in $w_2$ as follows: $w_2=(OU)OU(OU)OU$. 
\item[Step 2.] Take $S_1$, $S_2$, and assign subscripts and superscripts to the letters in $S_1$, $S_2$ as follows: 
$S_1=(O_1 U_2)(O_3 U_4) U^2_5 O^1_6 O^1_7 U^2_8$, $S_2= (O_1 U_2) O^2_3 U^1_4 (O_5 U_6) O^2_7 U^1_8$. 
\item[Step 3.] Then we have $S'_1=O_1 U_2 O_3 U_4$, $S'_2 = O_1 U_2 O_5 U_6$ and we use the diagram $A(1)$. 
\begin{figure}[ht]
\centering
\includegraphics[width=8.5cm]{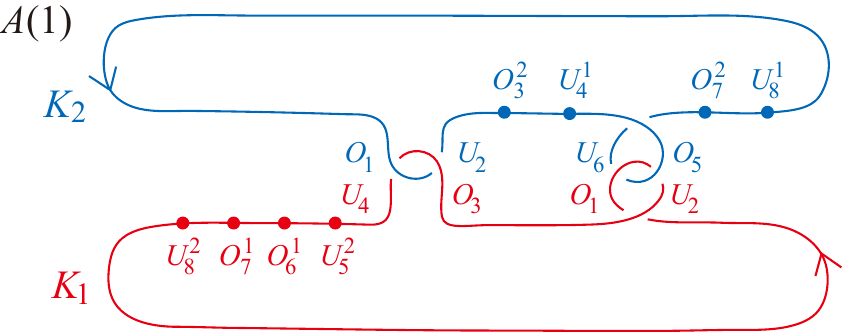}
\caption{The diagram $A(1)$ equipped with points with the letters in $S_1$, $S_2$.}
\label{fig-ex-3}
\end{figure}
\item[Step 4.] Assign the crossing information to $A(1)$ as shown in Figure \ref{fig-ex-3}. 
\item[Step 5.] Perform the transformation of Step 5 for $l=2$ and then $l=1$ so that the points $U^2_5$, $U^2_8$ on $K_1$ and $O^2_3$, $O^2_7$ on $K_2$ match, and then $O^1_6$, $O^1_7$ on $K_1$ and $U^1_4$, $U^1_8$ on $K_2$, as shown in Figure \ref{fig-ex-5}. 
\begin{figure}[ht]
\centering
\includegraphics[width=12cm]{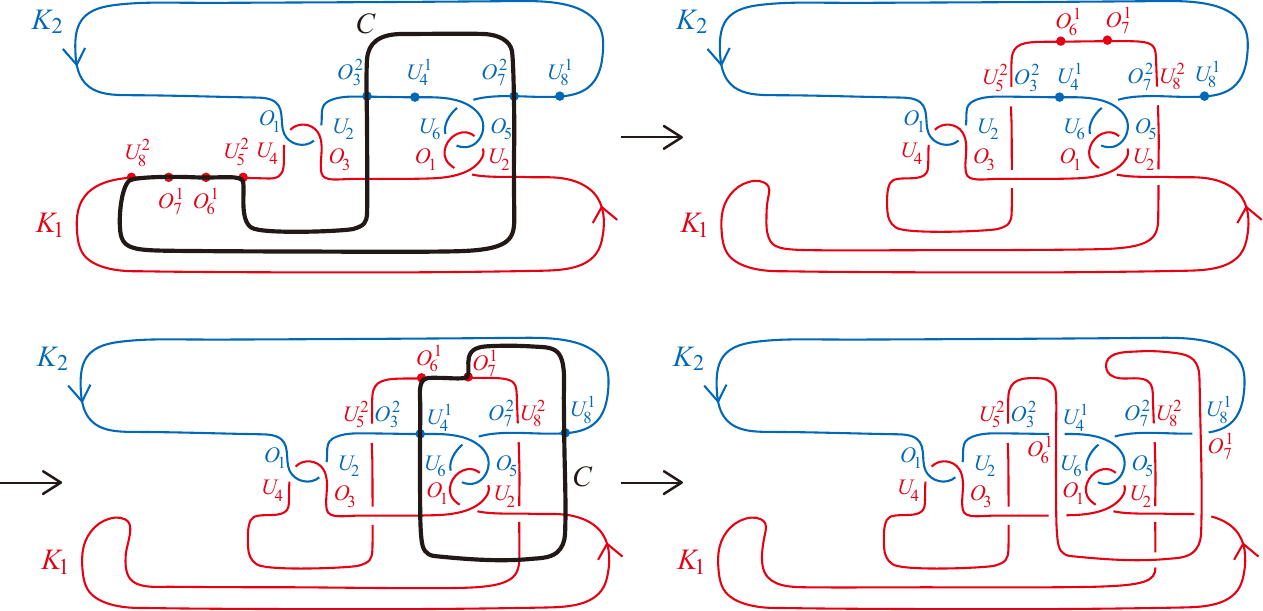}
\caption{Step 5 for $l=2, 1$. }
\label{fig-ex-5}
\end{figure}
\end{itemize}
Thus, we obtain a diagram $D$ of the trivial link with $\hat{f}(D)=(w_1, w_2)$. 
We can see that $D$ is deformed into the diagram $A(1)$ by Reidemeister moves by looking at the diagrams in Figure \ref{fig-ex-5} in opposite order. 
\label{ex-ex}
\end{example}
\medskip 

\noindent We prove Theorem \ref{thm-ch2}. 

\medskip 
\begin{proof}[Proof of Theorem \ref{thm-ch2}]
It follows from Theorem \ref{thm-ch} (II) and Lemma \ref{lem-trivial}.
\end{proof}
\medskip

\section{Proof of Theorem \ref{thm-ch3}}
\label{section-proof-ch3}

In this section, we prove Theorem~\ref{thm-ch3}. 
Similarly to Lemma~\ref{lem-seq-1}, we prove the following lemma, which will be used in Lemmas~\ref{lem-T24} and~\ref{lem-C}, regarding the ``$OU$'' or ``$UO$'' pairs that survive reductions.

\medskip 
\begin{lemma}
Let $w$ be a cyclic sequence of $O$ and $U$ of even length with $\# O(w) \geq 2$, $\# U(w) \geq 2$. 
\begin{itemize}
\item[(1)] In $w$, there are disjoint pairs of ``$OU$ and $UO$'' or ``$OU$ and $OU$''. 
\item[(2)] Suppose that $\Phi (w)=0$. 
There is a sequence of reductions to transform $w$ into a sequence $w'$ of length 4 such that $w$ has disjoint pairs ``$OU$'' and ``$UO$'' that remain in $w'$. 
\end{itemize}
\label{lem-seq-2}
\end{lemma}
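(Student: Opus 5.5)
We argue directly on the cyclic word, using maximal runs (blocks) of equal letters. Since $w$ contains both symbols, it decomposes cyclically into blocks $O^{a_1}U^{b_1}O^{a_2}U^{b_2}\cdots O^{a_t}U^{b_t}$ with $t\ge 1$ and all $a_i,b_i\ge 1$. Every transition from an $O$-block to the following $U$-block gives an adjacent ``$OU$''. Every transition from a $U$-block to the following $O$-block gives an adjacent ``$UO$''.

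For (1), first suppose $t\ge 2$. Then $w$ has at least two $OU$ transitions, for instance at the ends of the blocks $O^{a_1}$ and $O^{a_2}$. The corresponding two-letter pairs occupy different positions, so they are disjoint, and we get two disjoint ``$OU$'' pairs. Now suppose $t=1$, so $w=O^aU^b$ with $a,b\ge 2$ by hypothesis. Take the pair formed by the last $O$ and the first $U$, which is an ``$OU$''. Take the pair formed by the last $U$ and the first $O$ cyclically, which is a ``$UO$''. These involve four distinct letters because $a,b\ge 2$, so they are disjoint. This covers all cases of (1).

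For (2), assume $\Phi(w)=0$. The main obstacle is that reductions could destroy the chosen pairs. So we choose the reductions explicitly, mimicking the proof of Lemma~\ref{lem-seq-1}, and induct on the length $|w|$.

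For the base case $|w|=4$: since $\#O(w),\#U(w)\ge 2$, the word is $OOUU$ or $OUOU$ cyclically. The second has $\Phi=2$, so $w=OOUU$. Here $O_2U_3$ and $U_4O_1$ are the required disjoint pairs, and $w'=w$ after zero reductions.

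For the inductive step, let $|w|\ge 6$. Since $\Phi(w)=0$, the word $w$ is not alternating, so some block has length at least $2$.

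\begin{itemize}
\item If some block has length at least $3$, remove two interior-adjacent letters of that block. The letters at both ends of the block survive, the counts of $O$ and $U$ stay at least $2$, and $\Phi$ is unchanged by reduction. By induction, the resulting word has pairs that survive to length $4$. These pairs consist of letters of $w$ and are adjacent in $w$: any boundary-adjacent pair is untouched, since only letters strictly inside the block were removed.
\item Otherwise every block has length at most $2$. Take a block $OO$ and remove it whole if $\#O(w)\ge 4$, or similarly a block $UU$ if $\#U(w)\ge 4$. This may merge the neighbouring $U$-blocks, so pairs formed in the reduced word across the merge point need not be adjacent in $w$.
\end{itemize}

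To avoid this, we strengthen the induction hypothesis. We claim that \emph{the surviving pairs can be taken to be block-boundary pairs of $w$}, namely the last letter of one block together with the first letter of the next. Then, when removing a block we keep the boundary pairs away from it. Since $t\ge 2$ in this case, there are at least two $OU$ boundaries and two $UO$ boundaries, so there is room to choose pairs not touching the removed block.

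Finally, if $\#O(w)=\#U(w)=2$, then $|w|=4$, which is the base case. So the induction closes, and the required sequence of reductions and disjoint ``$OU$'' and ``$UO$'' pairs exists.
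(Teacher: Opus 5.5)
Part (1) and your base case are fine. The inductive step of (2), however, has a genuine gap in your Case A when the long block has length exactly $3$.

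Such a block has only one interior letter, so you cannot delete two adjacent letters of it while keeping both ends. The deletion creates a new ``$UO$'' or ``$OU$'' adjacency that is not present in $w$, and the inductive hypothesis is free to return it. A concrete case is $w=O_1O_2O_3U_4U_5O_6U_7U_8$. Here $\Phi(w)=0$ and no block has length $\ge 4$, so Case A must use $O_1O_2O_3$.
\begin{itemize}
\item Deleting $O_1O_2$ gives $w^*=O_3U_4U_5O_6U_7U_8$. A legitimate answer for $w^*$ is the pairs $O_6U_7$, $U_8O_3$, followed by the reduction $U_4U_5$. But $U_8O_3$ is not a pair of $w$.
\item Deleting $O_2O_3$ instead has the equally legitimate bad answer $O_1U_4$, $U_5O_6$.
\end{itemize}
So no single fixed deletion works, and an extra idea is needed.

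The paper supplies it by comparing the two deletions (first--second versus second--third letters of the run). The two reduced words coincide once you identify the surviving letters $O_3$ and $O_1$. An answer is bad for the first word only if it uses the left adjacency (preceding letter, survivor). Its transport to the second word is bad only if it uses the right adjacency (survivor, following letter). Two disjoint pairs cannot use both, so one of the two deletions always works.

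Alternatively, you can avoid odd blocks altogether. If every block were odd, $w$ would reduce to $(OU)^t$ with $\Phi=t\ge 1$, so $\Phi(w)=0$ forces an even block. You can then delete either two interior letters of an even block of length $\ge 4$, or a whole block of length $2$, chosen so both letter counts stay $\ge 2$ (always possible when $|w|\ge 6$). Neither deletion creates a new ``$OU$'' or ``$UO$'' adjacency.

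Your Case B reasoning is also off, though there the conclusion survives. The ``strengthened'' hypothesis is vacuous: every adjacent ``$OU$'' or ``$UO$'' in $w$ is automatically a block-boundary pair. Moreover, ``keeping the boundary pairs away from the removed block'' is not an inductive step. The pairs are produced by the hypothesis applied to $w^*$ after the deletion; you do not get to choose them beforehand. No strengthening is needed in Case B. Deleting a whole block $OO$ only creates the adjacency $UU$ at the merge point, so every ``$OU$'' or ``$UO$'' pair of $w^*$ is already a pair of $w$.
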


\begin{proof}
Let $w$ be a cyclic sequence of $O$ and $U$ of even length with $\# O(w) \geq 2$, $\# U(w) \geq 2$. 

\vspace{1ex}
\noindent
(1)
By the condition, $w$ has at least one ``$OU$'' pair. 
If the next letter is ``$O$'' (resp. ``$U$''), then $w$ has an ``$OU$'' pair (resp. ``$UO$'' pair) after the ``$OU$'' pair. 

\vspace{1ex}
\noindent
(2)
Suppose that $\Phi (w)=0$. 
Let $m$ be the length of $w$. 
Note that $m$ is even and $m \geq 4$. 
We prove the claim (2) by an induction on $m$. 
When $m=4$, this holds since $w=OOUU=OUUO$. 

Assume that the claim holds when $m=l$. 
Suppose that $m=l+2$. 
Then $w$ has a subsequence $v$ of two or more consecutive $O$s or $U$s. 
Let $w^*_1$ (resp. $w^*_2$) be the sequence obtained from $w$ by a reduction at the first and second (resp. second and third if exist) letters in $v$. 

By assumption, both $w^*_1$ and $w^*_2$ have disjoint ``$OU$'' and ``$UO$'' pairs that survive a sequence of reductions to a sequence of length 4. 
The original sequence $w$ has the same ``$OU$'' and ``$UO$'' pairs as at least one of the sequences $w^*_1$ and $w^*_2$. 
\end{proof}

\begin{example}
Let $w= U_1 O_2 O_3 O_4 O_5 U_6$. 
By a reduction in the consecutive $O$s, we obtain $w^*_1 = U_1 O_4 O_5 U_6$ and $w^*_2 = U_1 O_2 O_5 U_6$. 
In this case, $w$ has the same ``$OU$'' and ``$UO$'' pairs to $w^*_2$. 
\end{example}
\medskip 

\noindent For the Hopf link $2^2_1$, we show the following lemma. 

\medskip 
\begin{lemma}
If $(w_1, w_2)$ is well-balanced and $\# O(w_1) \equiv 1 \pmod{2}$, then there exists a diagram $D$ of a Hopf link with $\hat{f}(D) =(w_1, w_2)$. 
\label{lem-hopf}
\end{lemma}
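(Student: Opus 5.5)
The proof will run parallel to that of Lemma~\ref{lem-trivial}, with the core diagram $A(N)$ replaced by the Hopf-link core $B(N)$ of Corollary~\ref{cor-diagrams}(2). Let $(w_1,w_2)$ be well-balanced with $\#O(w_1)\equiv 1 \pmod 2$. By Proposition~\ref{prop-Phi-O}, $\Phi(w_1)\equiv\Phi(w_2)\equiv 1 \pmod 2$; in particular both OU numbers are at least $1$, and both sequences contain both letters. After swapping the roles of $w_1,w_2$ and of $K_1,K_2$ if necessary, I will assume $\Phi(w_1)\le\Phi(w_2)$. Set $k=\frac12(|w_1|-2\Phi(w_1))$, the number of reductions needed to bring $w_1$ to an alternating sequence.

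Steps 1--4 are carried out as before. By Lemma~\ref{lem-seq-1}(2), fix $\Phi(w_1)$ pairs ``$OU$'' in $w_1$ that survive a chosen sequence $r$ of $k$ reductions. By Lemma~\ref{lem-seq-1}(1) and $\Phi(w_1)\le\Phi(w_2)$, fix $\Phi(w_1)$ pairs ``$OU$'' in $w_2$. Choose non-cyclic sequences $S_1,S_2$ beginning with a fixed pair. Label the unfixed letters of $S_1$ with superscripts recording the order of reductions in $r$, and distribute matching superscripts to unfixed letters of $S_2$ with opposite letters; this uses condition (ii) of well-balancedness, exactly as in Step 2 of Lemma~\ref{lem-trivial}. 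Deleting the unfixed letters gives alternating sequences $S'_1,S'_2$, each of length $2\Phi(w_1)$ with an odd number of $O$s. Now take the diagram $B(N)$ with $N=\frac{\Phi(w_1)-1}{2}$, which has $\hat f(B(N))=(S'_1,S'_2)$ up to cyclic shift, and assign its non-self crossings the letters of $S'_1,S'_2$. Then place the remaining labelled letters as points on the arcs of $B(N)$ in the order prescribed by $S_1$ and $S_2$.

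Step 5 is then performed verbatim. For $l=k,\dots,1$, take the portion $I$ of $K_1$ between the two letters with superscript $l$. No fixed pair lies between them, because $r$ reduces adjacent letters after earlier removals, so $I$ can be chosen free of non-self crossings. Replace $I$ by $\overline{C\setminus I}$ for a simple closed curve $C$ that passes through the two designated points of $K_2$, taking $C\setminus I$ entirely over or entirely under according to the letter. Each such replacement introduces two non-self crossings with the required letters. Any further crossings it creates lie with $K_1$ itself, are self-crossings, and are invisible in $\hat f$. Since $C\setminus I$ is uniformly over or under, the move is undone by Reidemeister moves, so the link type stays that of $B(N)$, namely the Hopf link. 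The result is $\hat f(D)=(w_1,w_2)$.

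The main obstacle is verifying that the curve $C$ in Step 5 can always be drawn meeting $K_2$ only at the two prescribed points. This requires viewing $K_2$ as essentially a circle with small hooks, and checking that the designated points on $K_2$ lie outside the hooks of $B(N)$ where they clasp $K_1$, just as in Lemma~\ref{lem-trivial}. Here this should work identically, since $B(N)$ differs from $A(N)$ only locally, in the tangle region. The one point to handle separately is the degenerate case $\Phi(w_1)=1$, where $N=0$ and the core is the standard two-crossing Hopf diagram with $S'_1=S'_2=OU$. In that case $K_2$ is literally a circle, so the argument goes through without modification.
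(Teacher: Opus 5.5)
Your proposal is correct and is the paper's argument: the paper proves this lemma by rerunning the proof of Lemma~\ref{lem-trivial} with the core diagram $B(\frac{\Phi(w_1)-1}{2})$ in place of $A(\frac{\Phi(w_1)}{2})$ in Step~3. Your parity check via Proposition~\ref{prop-Phi-O} and your separate treatment of the case $\Phi(w_1)=1$ make explicit what the paper leaves implicit, namely that $\Phi(w_1)$ is odd, so $N$ is a nonnegative integer and fixed pairs exist.
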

\medskip 

\begin{proof}
It can be proved in the same way as the proof of Lemma \ref{lem-trivial}, where we use the core diagram $B ( \frac{\Phi (w_1)-1}{2} )$ shown in Figure \ref{fig-T} in Step~3 instead of $A ( \frac{\Phi (w_1)}{2} )$. 
\end{proof}
\medskip 

\noindent For the Solomon link $4^2_1$, we show the following lemma. 

\medskip 
\begin{lemma}
If $(w_1, w_2)$ is well-balanced and $\# O(w_1) \equiv 0 \pmod{2}$, $\# O(w_1) \geq 2$, and $\# U(w_1) \geq 2$, then there exists a diagram $D$ of a Solomon link with $\hat{f}(D)=(w_1, w_2)$. 
\label{lem-T24}
\end{lemma}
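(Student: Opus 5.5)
The plan is to mimic the proof of Lemma~\ref{lem-trivial}: build a small core diagram of the Solomon link realizing a short pair of sequences, then insert the remaining letters by the isotopy-preserving finger moves of Step~5.

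The natural core is the minimal diagram of $T(2,4)=4^2_1$. With a suitable orientation it has $\hat{f}=(OUOU,OUOU)$. Applying the transformation of Figure~\ref{fig-m2} at one bigon gives a diagram of the same link realizing $(OUOU,OUOU)$ but with linking number $0$ up to orientation. So the core must contribute four letters to each of $w_1$ and $w_2$. In $w_1$ they must be chosen as two disjoint pairs that survive the reductions.

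\textbf{Step 1 (choosing the core letters).} I will split into cases by the OU numbers. The parity conditions give $\Phi(w_1)\equiv\Phi(w_2)\equiv 0 \pmod 2$ by Proposition~\ref{prop-Phi-O}. As in Lemma~\ref{lem-trivial}, assume $\Phi(w_1)\le\Phi(w_2)$.

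If $\Phi(w_1)=0$, Lemma~\ref{lem-seq-2}(2) provides disjoint ``$OU$'' and ``$UO$'' pairs in $w_1$ surviving a reduction sequence $r$ down to length $4$. If $\Phi(w_1)\ge 2$, Lemma~\ref{lem-seq-1}(2) provides $\Phi(w_1)$ surviving ``$OU$'' pairs. In that case I use the core $A(N)$ with an extra Solomon clasp, or simply $T(2,2)$-type pieces: a Solomon core occupying two ``$OU$'' pairs, plus the trivial hooks of $A(\cdot)$ for the remaining $\Phi(w_1)-2$ pairs.

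For $w_2$, Lemma~\ref{lem-seq-2}(1) supplies disjoint ``$OU$ and $UO$'' or ``$OU$ and $OU$'' pairs, using $\#O(w_2)=\#U(w_1)\ge2$ and $\#U(w_2)\ge2$. The key point is that the Solomon core can be drawn realizing all four of the combinations (OUOU or OUUO on $K_1$) $\times$ (OUOU or OUUO on $K_2$) while still being $4^2_1$. For the cases with $OUUO$ on one side, I will obtain this by performing the Figure~\ref{fig-m2} type move, or by reading the standard diagram from a different base point and orientation.

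\textbf{Step 2 (inserting the remaining letters).} With the core fixed, I will run Steps~2, 4 and 5 of Lemma~\ref{lem-trivial} verbatim. First label the unfixed letters of $S_1$ by the order of the reductions in $r$, and pair them with unfixed letters of $S_2$, which is possible by well-balancedness. Then, for $l=k,\dots,1$, replace a crossing-free arc $I$ of $K_1$ by $\overline{C\setminus I}$, with $C$ passing entirely over or entirely under. Each such move is an isotopy, so the link type stays $4^2_1$. As before, $K_2$ is regarded as a circle after shrinking the core's clasps, so that $C$ can be drawn through the two designated points of $K_2$.

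The main obstacle is Step~1, namely verifying that a Solomon-link core exists for every pattern of surviving pairs. The pair types come from Lemma~\ref{lem-seq-2}(1), which only guarantees ``$OU,UO$'' or ``$OU,OU$'', and they must be placed compatibly on both components simultaneously. I would first check by hand explicit diagrams of $4^2_1$ with $\hat{f}$ equal to each of $(OUOU,OUOU)$, $(OUOU,OUUO)$, $(OUUO,OUOU)$ and $(OUUO,OUUO)$; the last comes from Figure~\ref{fig-m2} applied to the torus diagram, and the mixed ones by adding a Reidemeister II-type clasp. A further check is needed for the case where the surviving pairs of $w_1$ are not adjacent to the core's arcs in the cyclic order. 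This should reduce to placing the remaining letters on the appropriate arcs in Step~4.
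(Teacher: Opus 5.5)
Your overall architecture is the paper's. You normalize $\Phi(w_1)\le\Phi(w_2)$. For $\Phi(w_1)\ge 2$ you use a Solomon clasp plus $(\Phi(w_1)-2)/2$ copies of the trivial tangle $T$, which is the paper's core $C(N)$. For $\Phi(w_1)=0$ you fix surviving ``$OU$'' and ``$UO$'' pairs in $w_1$ by Lemma~\ref{lem-seq-2}(2), and ``$OU,UO$'' or ``$OU,OU$'' in $w_2$ by Lemma~\ref{lem-seq-2}(1). You then run Step~5 of Lemma~\ref{lem-trivial}.

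\textbf{The gap is in the case $\Phi(w_1)=0$.} That case rests entirely on having diagrams of $4^2_1$ with $\hat{f}=(OUUO,OUUO)$ and $\hat{f}=(OUUO,OUOU)$. Only these two are needed under your normalization; $(OUOU,OUUO)$ never arises. The paper supplies them explicitly as $D^a$ and $D^b$ in Figure~\ref{fig-ab-24}. You leave their existence as something to check by hand, and the constructions you suggest for them do not work.

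\textbf{Why your constructions fail.} The transformation of Figure~\ref{fig-m2} preserves $\hat{f}$ and shifts the linking number by $2$. Applied to the minimal diagram of $T(2,4)$, it returns $(OUOU,OUOU)$, not $(OUUO,OUUO)$. It also produces a link with $lk=0$, which cannot be $4^2_1$, since $|lk|=2$ is an invariant of the Solomon link. This is essentially how the trivial-link cores $A(N)$ of Corollary~\ref{cor-diagrams} arise. So your opening assertion that this move gives ``the same link'' with linking number $0$ is false. A Reidemeister~II clasp cannot help either. If it is a self-clasp, it leaves $\hat{f}$ unchanged. If it is between the components, it inserts $OO$ into one sequence and $UU$ into the other, so it lengthens both to $6$. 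Either way it cannot turn $OUOU$ into $OUUO$ at length $4$.

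\textbf{What is actually required.} Proposition~\ref{prop-lk-Phi} shows these cores must be genuinely different from the torus diagram. Since $\Phi(OUUO)=0\neq 2=|lk(4^2_1)|$, a Solomon diagram realizing $(OUUO,OUUO)$ needs self-crossings on both components. One realizing $(OUUO,OUOU)$ needs self-crossings on $K_2$. To complete the proof you must exhibit such diagrams explicitly and verify that they represent $4^2_1$. You should also check that, as in Step~5, their clasps can be shrunk so that $K_2$ can still be treated as a circle. With those cores in hand, the rest of your argument goes through as in the paper.
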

\medskip 

\begin{proof}
We assume that $\Phi (w_1) \leq \Phi (w_2)$ without loss of generality. 
\begin{itemize}
\item[$\bullet$] If $\Phi (w_1) \geq 2$, it can be proved in the same way as the proof of Lemma \ref{lem-trivial}, where we use the core diagram $C( \frac{\Phi (w_1)-2}{2})$ shown in Figure \ref{fig-C} in Step 3 instead of $A( \frac{\Phi (w_1)}{2})$. 
\begin{figure}[ht]
\centering
\includegraphics[width=5.5cm]{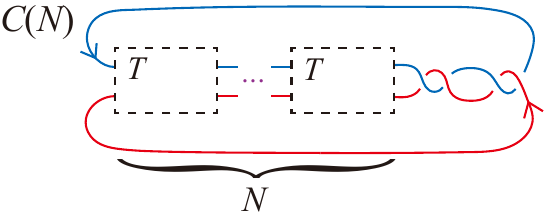}
\caption{A diagram $C(N)$ representing a Solomon link. The tangle $T$ is illustrated in Figure \ref{fig-T}. }
\label{fig-C}
\end{figure}
\item[$\bullet$] If $\Phi (w_1)=0$, it can be proved in a similar way to the proof of Lemma \ref{lem-trivial}, where we execute the following Steps $2'$ and $3'$ instead of Steps 2 and 3. 
\begin{itemize}
\item[Step $2'$.] By Lemma \ref{lem-seq-2} (2), $w_1$ has disjoint pairs ``$OU$'' and ``$UO$'' that survive a sequence of reductions $r$ to length four. 
Fix such ``$OU$'' and ``$UO$'' in $w_1$. 
Fix ``$OU$ and $UO$'' or ``$OU$ and $OU$'' in $w_2$ (Lemma \ref{lem-seq-2} (1)). 
\item[Step $3'$.] Take a diagram $D^a$ (resp. $D^b$) of a Solomon link in Figure \ref{fig-ab-24} if $S'_2=(OU)(UO)$ (resp. $S'_2=(OU)(OU)$). 
\begin{figure}[ht]
\centering
\includegraphics[width=7cm]{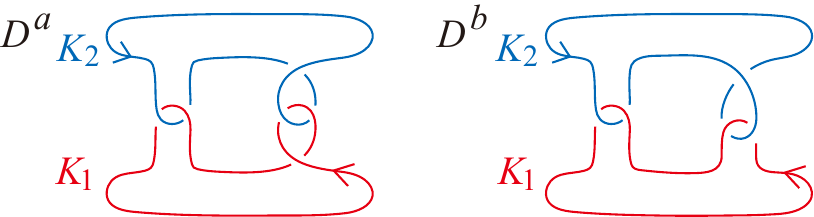}
\caption{Diagrams $D^a$, $D^b$ representing a Solomon link with a pair of non-self OU sequences $(UOOU, OUUO)$, $(UOOU, OUOU)$, respectively.}
\label{fig-ab-24}
\end{figure}
\end{itemize}
\end{itemize}
\end{proof}

\noindent For the Whitehead link $5^2_1$, we show the following lemma. 

\medskip 
\begin{lemma}
If $(w_1, w_2)$ is well-balanced and $\#O(w_1) \equiv 0 \pmod{2}$, $\# O(w_1) \geq 2$ and $\# U(w_1) \geq 2$, then there exists a diagram $D= K_1 \cup K_2$ of a Whitehead link with $\hat{f}(D)=(w_1, w_2)$. 
\label{lem-C}
\end{lemma}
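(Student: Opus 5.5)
We follow the proof of Lemma~\ref{lem-T24} almost verbatim, keeping the whole Step~5 machinery of Lemma~\ref{lem-trivial} and changing only the core diagram used in Step~3. As before, we may interchange $w_1,w_2$ (and $K_1,K_2$) and assume $\Phi(w_1)\le\Phi(w_2)$. By Proposition~\ref{prop-Phi-O}, $\Phi(w_1)\equiv\Phi(w_2)\equiv \#O(w_1)\equiv 0 \pmod 2$. We then split into the two cases $\Phi(w_1)\ge 2$ and $\Phi(w_1)=0$.

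\textbf{Case $\Phi(w_1)\ge 2$.} Steps~1 and~2 are carried out exactly as in Lemma~\ref{lem-trivial}, fixing $\Phi(w_1)$ surviving ``$OU$'' pairs in $w_1$ (Lemma~\ref{lem-seq-1}(2)) and $\Phi(w_1)$ ``$OU$'' pairs in $w_2$ (Lemma~\ref{lem-seq-1}(1)). In Step~3 we replace $A(\frac{\Phi(w_1)}{2})$ by a core diagram $W(N)$ of the Whitehead link with $N=\frac{\Phi(w_1)-2}{2}$. This diagram should have no self-crossing on $K_2$, exactly $2N+2$ non-self crossings, and alternating non-self OU sequence on both components. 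The natural candidate is the diagram of Figure~\ref{fig-whitehead}, modified so that its non-self part is alternating: take the standard Whitehead diagram in which $K_1$ is a clasp with one self-crossing and $K_2$ is a round circle, so that $\hat f=(OUOU,OUOU)$ after a suitable choice. For larger $N$, insert $N$ copies of the trivial tangle $T$ of Figure~\ref{fig-T} along $K_2$, as in the passage from $A(0)$ to $A(N)$. Since $T$ is a trivial tangle, the link type stays Whitehead. Each copy of $T$ contributes an alternating block ``$OUOU$'' to both sequences, so the total non-self OU sequences are alternating of length $2\Phi(w_1)$. Steps~4 and~5 are then unchanged. The key requirement is that $K_2$ in the core still looks like a circle with small hooks, so that the curves $C$ of Step~5 can be drawn; each Step~5 move is undone by Reidemeister moves, so the final link is again the Whitehead link.

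\textbf{Case $\Phi(w_1)=0$.} We use Steps~$2'$ and~$3'$ as in Lemma~\ref{lem-T24}. By Lemma~\ref{lem-seq-2}(2), we fix disjoint surviving pairs ``$OU$'' and ``$UO$'' in $w_1$. By Lemma~\ref{lem-seq-2}(1), we fix ``$OU,UO$'' or ``$OU,OU$'' in $w_2$. We then need two Whitehead link diagrams $E^a,E^b$ with $\hat f=(UOOU,OUUO)$ and $(UOOU,OUOU)$ respectively, with $K_2$ free of self-crossings (or at least circle-like), so that Step~5 applies.

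For $E^a$, Figure~\ref{fig-whitehead} already gives $(OUOU,OOUU)$. After swapping the roles of the components and rotating cyclically, this becomes $(UOOU,OUUO)$ up to cyclic shift. Here the round component has sequence $OOUU$, which has $\Phi=0$, consistent with Proposition~\ref{prop-lk-Phi} since $lk=0$. For $E^b$, we need the circle-like component to carry the sequence with $\Phi=0$, namely $UOOU$, while the other component carries the alternating sequence $OUOU$. So $K_2$ must have a self-crossing. Note that Proposition~\ref{prop-lk-Phi} forces $\Phi(K_2)=|lk|=0$ whenever $K_1$ is simple.

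\textbf{Main obstacle.} The main difficulty is producing explicit pictures of $W(N)$ and $E^b$ and checking that they are Whitehead links, for instance via a crossing count together with an invariant. We would obtain $E^b$ from the standard Whitehead diagram by a Reidemeister~II move of $K_1$ over $K_2$ in the clasp region, adjusting the sequence. If that fails, we would instead apply a Reidemeister~I-type twist to the clasp and read off the new sequence, checking by direct computation that the pair $(UOOU,OUOU)$ is obtained. We also have to make sure that the component on which Step~5 operates ($K_1$) is the one whose reductions are tracked, which may require swapping the roles of the components in the core diagram.
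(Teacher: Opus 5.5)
Your plan has the same architecture as the paper's proof: rerun the proof of Lemma~\ref{lem-T24} with Whitehead cores in place of the Solomon cores, namely $D(N)$ of Figure~\ref{fig-D} in Step~3 and $D^{a'},D^{b'}$ of Figure~\ref{fig-ab-C} in Step~$3'$. However, those cores are the only new input this lemma needs, and in your proposal they are either impossible or misidentified.

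\textbf{Case $\Phi(w_1)\ge 2$.} The core must satisfy $\hat f=(w',w')$ with $w'$ alternating of length $2\Phi(w_1)=4N+4$; that is $4N+4$ non-self crossings, not $2N+2$. It must also have $lk=0$. Such a diagram cannot have a component without self-crossings. If $K_2$ had none, Proposition~\ref{prop-lk-Phi} (with the components exchanged) would give $\Phi(K_1)=|lk|=0$. This contradicts $\Phi(w')=2N+2>0$, and symmetrically for $K_1$. So your requirement ``no self-crossing on $K_2$'' is inconsistent with the other two requirements. Your base candidate also does not exist: a Whitehead diagram with a round component and $\hat f=(OUOU,OUOU)$. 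The standard diagram has $\hat f=(OUOU,OOUU)$, and any crossing data on a projection with a round component that puts $OUOU$ on the other component forces $|lk|=2$. The core must carry self-crossings on both components, with those on $K_2$ confined to small hooks so that the curves $C$ of Step~5 can still be drawn; this is exactly what $D(N)$ supplies. The gap cannot be sidestepped. For $(w_1,w_2)=(OUOU,OUOU)$ there are no reductions at all, so the core itself must realize the pair.

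\textbf{Case $\Phi(w_1)=0$.} Here you have the two pairs swapped. In Figure~\ref{fig-whitehead}, the component without self-crossings is $K_1$, carrying $OUOU$, and the self-crossing lies on $K_2$, carrying $OOUU$. Proposition~\ref{prop-lk-Phi} constrains the OU number of the \emph{other} component, not the round one.
\begin{itemize}
\item Exchanging the components gives $(OOUU,OUOU)$, which is cyclically $(UOOU,OUOU)$: the pair required of $E^b$. The round component then plays the role of $K_2$, exactly as Step~5 needs. So $E^b$ comes for free, and your argument that $K_2$ must have a self-crossing is backwards. $\Phi(w_2)=2$ forces a self-crossing on $K_1$, while $\Phi(w_1)=0$ allows $K_2$ to be round.
\item The pair $(UOOU,OUUO)$ is cyclically $(OOUU,OOUU)$. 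Figure~\ref{fig-whitehead} does not give it, since $OUOU\ne OUUO$ cyclically, and you supply no diagram for it.
\end{itemize}
This $E^a$ is unavoidable. When $w_2=O^aU^b$, $w_2$ has no two disjoint ``$OU$'' pairs, so Step~$2'$ must use $E^a$; for $(OOUU,OOUU)$ the core itself must realize the pair.

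One construction of $E^a$ goes as follows.
\begin{enumerate}
\item Let $K_2$ be a round circle.
\item Let $K_1$ pass through the disk bounded by $K_2$ once downward and once upward, along two parallel chords. This gives $OUUO$ on both components and $lk=0$.
\item Hook the two arcs of $K_1$ lying above and below that disk into each other by a clasp drawn outside the circle.
\end{enumerate}
This adds only two self-crossings of $K_1$, so $\hat f$ is unchanged, and it turns the unlink into the Whitehead link.
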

\medskip 

\begin{proof}
\begin{figure}[ht]
\centering
\includegraphics[width=5.5cm]{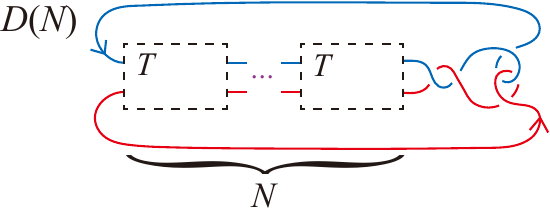}
\caption{A diagram $D(N)$ representing a Whitehead link. The tangle $T$ is illustrated in Figure \ref{fig-T}.}
\label{fig-D}
\end{figure}
It can be proved in the same way as the proof of Lemma \ref{lem-T24}, where we use the diagram $D( \frac{\Phi (w_1)-2}{2})$ in Figure \ref{fig-D} in Step 3 instead of $C( \frac{\Phi (w_1) -2}{2})$, and use the diagrams $D^{a'}$, $D^{b'}$ in Figure \ref{fig-ab-C} in Step $3'$ instead of $D^a , D^b$. 
\begin{figure}[ht]
\centering
\includegraphics[width=7.5cm]{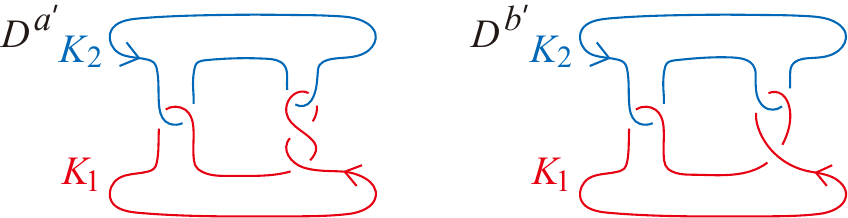}
\caption{Diagrams $D^{a'}$, $D^{b'}$ representing a Whitehead link with a pair of non-self OU sequences $(UOOU, OUUO)$, $(UOOU, OUOU)$, respectively.}
\label{fig-ab-C}
\end{figure}
\end{proof}
\medskip

\noindent We prove Theorem \ref{thm-ch3}. 

\medskip 
\begin{proof}[Proof of Theorem \ref{thm-ch3}]
Statement (A) follows from Theorem \ref{thm-ch} (III) and Lemma \ref{lem-hopf}. 
Statement (B) follows from Theorem \ref{thm-ch} (II), Proposition \ref{prop-sp} and Lemmas \ref{lem-T24}, \ref{lem-C}. 
\end{proof}
\medskip

\noindent From Theorems \ref{thm-ch}, \ref{thm-ch2}, \ref{thm-ch3} and Proposition \ref{prop-sp}, we obtain the following corollary. 

\medskip 
\begin{corollary}
Let $\mathcal{L}^S$ (resp. $\mathcal{L}^N$) be the set of all oriented diagrams of two-component split (resp. non-split) links. 
The following holds. 
\begin{itemize}
\item[$\bullet$] $\hat{f}(\mathcal{L}^S)= \hat{f}(\mathcal{L}^{\text{even}})$. 
\item[$\bullet$] $\hat{f}(\mathcal{L}^N)= \{  (w_1, w_2) \ | \ (w_1, w_2) \text{: well-balanced, } 
\#O(w_1), \  \#U(w_1) \geq 1 \}$.
\end{itemize}
\label{cor-non-split}
\end{corollary}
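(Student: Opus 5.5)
Both bullets are proved by a pair of inclusions each, with the upper bounds coming from Theorem \ref{thm-ch} and Proposition \ref{prop-sp} and the lower bounds coming from the explicit realizations in Theorems \ref{thm-ch2} and \ref{thm-ch3}.

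\emph{First bullet.} For the inclusion ``$\subseteq$'', a diagram of a split link has linking number $0$. So every split diagram lies in $\mathcal{L}^{\text{even}}$, and $\hat{f}(\mathcal{L}^S)\subseteq \hat{f}(\mathcal{L}^{\text{even}})$ holds trivially. For ``$\supseteq$'', the trivial two-component link is split, so $\mathcal{L}^T\subseteq \mathcal{L}^S$. Theorem \ref{thm-ch2} then gives $\hat{f}(\mathcal{L}^{\text{even}})=\hat{f}(\mathcal{L}^T)\subseteq \hat{f}(\mathcal{L}^S)$.

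\emph{Second bullet, inclusion ``$\subseteq$''.} Let $D$ be a diagram of a non-split link $L$ with $\hat{f}(D)=(w_1,w_2)$. By Theorem \ref{thm-ch}(I) the pair is well-balanced. If $\#O(w_1)=0$ or $\#U(w_1)=0$, Proposition \ref{prop-sp} gives $sp(L)\le 0$. Then $L$ would be split, a contradiction, so $\#O(w_1),\#U(w_1)\ge 1$.

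\emph{Second bullet, inclusion ``$\supseteq$''.} Take a well-balanced pair with $\#O(w_1),\#U(w_1)\ge 1$ and split by the parity of $\#O(w_1)$.
\begin{itemize}
\item[$\bullet$] If $\#O(w_1)$ is odd, Theorem \ref{thm-ch3}(A) together with Theorem \ref{thm-ch}(III) realizes the pair by a diagram of the Hopf link, which is non-split.
\item[$\bullet$] If $\#O(w_1)$ is even, then $\#O(w_1)\ge 2$. Condition (i) makes the length of $w_1$ even, so $\#U(w_1)$ is also even, and hence $\#U(w_1)\ge 2$. Theorem \ref{thm-ch3}(B) then realizes the pair by a diagram of the Solomon link (or the Whitehead link), which is non-split.
\end{itemize}

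The only step needing care is this parity bookkeeping: passing from ``$\ge 1$'' to ``$\ge 2$'' uses evenness of the length to transfer the parity of $\#O(w_1)$ to $\#U(w_1)$.
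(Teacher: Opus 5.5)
Your proof is correct and follows the paper's route. The paper simply states that the corollary follows from Theorems~\ref{thm-ch}, \ref{thm-ch2}, \ref{thm-ch3} and Proposition~\ref{prop-sp}, and your parity step (using the even length of $w_1$ to upgrade $\#O(w_1),\#U(w_1)\geq 1$ to $\geq 2$ when $\#O(w_1)$ is even, so that Theorem~\ref{thm-ch3}(B) applies) is exactly the bookkeeping the paper leaves implicit.
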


\section*{Acknowledgment}
This work was partially supported by the JSPS KAKENHI Grant Number JP21K03263 and JST CREST, Japan, Grant Number JPMJCR25Q3.

\end{document}